\numberwithin{equation}{section}
\newtheorem{theorem}{Theorem}[section]
\newtheorem{proposition}[theorem]{Proposition}
\newtheorem{lemma}[theorem]{Lemma}
\newtheorem{remark}[theorem]{Remark}
\newtheorem{example}[theorem]{Example}
\newtheorem{corollary}[theorem]{Corollary}
\newtheorem{definition}[theorem]{Definition}
\newcommand{\R}{\mathbb R}
\newcommand{\al}{\alpha}
\newcommand{\e}{\varepsilon}
\newcommand{\bt}{\begin{theorem}}
\newcommand{\et}{\end{theorem}}
\newcommand{\bl}{\begin{lemma}}
\newcommand{\el}{\end{lemma}}
\newcommand{\bd}{\begin{definition}}
\newcommand{\ed}{\end{definition}}
\newcommand{\bc}{\begin{corollary}}
\newcommand{\ec}{\end{corollary}}
\newcommand{\bp}{\begin{proof}}
\newcommand{\ep}{\end{proof}}
\newcommand{\bx}{\begin{example}}
\newcommand{\ex}{\end{example}}
\newcommand{\bi}{\begin{exercise}}
\newcommand{\ei}{\end{exercise}}
\newcommand{\bo}{\begin{proposition}}
\newcommand{\eo}{\end{proposition}}
\newcommand{\br}{\begin{remark}}
\newcommand{\er}{\end{remark}}
\newcommand{\be}{\begin{equation}}
\newcommand{\ee}{\end{equation}}
\newcommand{\ba}{\begin{align}}
\newcommand{\ea}{\end{align}}
\newcommand{\bn}{\begin{enumerate}}
\newcommand{\en}{\end{enumerate}}
\newcommand{\bg}{\begin{align*}}
\newcommand{\bcs}{\begin{cases}}
\newcommand{\ecs}{\end{cases}}
\def\R{\mathbb R}
\def\N{\mathbb N}
\def\Proof{\noindent{\bf Proof}\quad}
\def\qed{\hfill$\square$\smallskip}
\def\@makefnmark{}
\newcommand{\bean}{\begin{eqnarray*}}
\newcommand{\eean}{\end{eqnarray*}}
\renewcommand{\triangle}{\Delta}
\renewcommand{\epsilon}{\varepsilon}
\title[Positive solutions to the planar logarithmic Choquard equation]
{Positive solutions to the planar logarithmic Choquard equation via asymptotic approximation}
\author[D.~Cassani]{Daniele Cassani}
\author[L.~Du]{Lele Du}
\address[D.~Cassani and L.~Du]{\newline\indent Dipartimento di Scienza e Alta Tecnologia
	\newline\indent
	Universit\`{a} degli Studi dell'Insubria
	\newline\indent and
	\newline\indent RISM--Riemann International School of Mathematics
	\newline\indent Villa Toeplitz, Via G.B. Vico, 46 -- 21100 Varese, Italy.}
    \email{\href{mailto:daniele.cassani@uninsubria.it}{daniele.cassani@uninsubria.it}}
     \email{\href{mailto:dldu@uninsubria.it}{dldu@uninsubria.it}}
\author[Z.~Liu]{Zhisu Liu}
\address[Z.~Liu]{\newline\indent Center for Mathematical Sciences/School of Mathematics and Physics
\newline\indent China University of Geosciences
\newline\indent Wuhan, Hubei, 430074, PRC
\newline\indent and
\newline\indent Dipartimento di Scienza e Alta Tecnologia
	\newline\indent
	Universit\`{a} degli Studi dell'Insubria
	\newline\indent Villa Toeplitz, Via G.B. Vico, 46 -- 21100 Varese, Italy.}
    \email{\href{mailto:liuzhisu@cug.edu.cn}{liuzhisu@cug.edu.cn}}
\thanks{Corresponding author:
 D.~Cassani ({\tt daniele.cassani@uninsubria.it}).}
\thanks{Z. Liu is supported by the National Natural Science Foundation
of China (No.~12226331), and the Fundamental Research Funds for the Central
Universities, China University of Geosciences (Wuhan, No. CUG2106211; CUGST2).
}
\subjclass[2000]{35A15; 35J60; 35B40}
\date{\today}
\keywords{Schr\"{o}dinger-Poisson systems. Asymptotic analysis. Critical  growth.
Concentration-compactness principle. Variational method.}
\begin{document}

\begin{abstract}
In this paper we study the following nonlinear Choquard equation
$$
-\triangle u+u=\left(\ln\frac{1}{|x|}\ast F(u)\right)f(u),\quad\text{ in }\,\R^2,
$$
where $f\in C^1(\R,\R)$ and $F$ is the primitive of the nonlinearity $f$ vanishing at zero. We use an asymptotic approximation approach to establish the existence of positive solutions to the above problem in the standard Sobolev space $H^1(\R^2)$. We give a new proof and at the same time extend part of the results established in [Cassani--Tarsi, Calc.~Var.~P.D.E. (2021)].
\end{abstract}

\maketitle

%\section{Introduction and results}
%\label{sec1}

\section{Introduction}
\label{sec1}
\noindent Let us consider the following class of nonlocal equations
\begin{equation}\label{eqn:log-choquard0}
-\triangle u+u=\left(\ln\frac{1}{|x|}\ast F(u)\right)f(u),\quad\text{ in }\,\R^2,
\end{equation}
where $F\in C^1(\R,\R)$ is the primitive function of the nonlinearity $f$ vanishing at zero.
This two-dimensional problem has remained open for a long time because of the sign-changing nature of the Coulomb interaction, given by the convolution,
for which variational methods do not straightforward apply.

\noindent Indeed, on the one hand, equation \eqref{eqn:log-choquard0} has a variational structure, in the sense that at least formally, it  turns out to be the Euler-Lagrange equation related to the energy functional
\begin{equation}\label{eqn:log-fun}
I(u)=\frac{1}{2}\int_{\R^2}\bigg(|\nabla u|^2+u^2\bigg){\rm d}x+\frac{1}{2}\int_{\R^2}\int_{\R^2}\ln(|x-y|)F(u(y))F(u(x)){\rm d}x{\rm d}y.
\end{equation}
On the other hand, the second term in \eqref{eqn:log-fun} is not well-defined on the natural Sobolev space $H^1(\R^2)$. A few attempts to overcome this difficulty have been done, in particular for power-like nonlinearities, see \cite{CW} and references therein, where the finiteness of such logarithmic convolution term is required as additional condition, which settles the problem in a proper subspace of $H^1(\R^2)$. However, it is well known how the smaller the space the more difficult is to get energy estimates in order to prove existence results. Recently, in \cite{Cassani21} a different approach has been developed by means of a new wighted version of the Trudinger--Moser inequality, for which the problem is well defined in a log-weighted Sobolev space where variational methods can be applied up to cover the maximal possible nonlinear growth, which in dimension two it is of exponential type. See also \cite{Bucur2022} for further extensions in this direction.

\medskip

\noindent Let us make the following assumptions on the nonlinearity $f$:

\medskip 

\begin{itemize}
\item[ ($f_1$)] $f(s)\geq0$ for all $s\geq0$ and $f(s)\equiv0$ for all $s\leq0$. $f(s)\leq Cs^pe^{4\pi s^2}$ as $s\rightarrow+\infty$ for some $p>0$ and $f(t)=o(t)$ as $t\rightarrow0^+$.
\item[ ($f_2$)] there exist $C>1>\tau>0$ such that $\tau\leq \frac{F(s)f'(s)}{f^2(s)}\leq C$ for any $s>0$;
\item[ ($f_3$)] $\lim\limits_{s\rightarrow+\infty} \frac{F(s)f'(s)}{f^2(s)}=1$ or equivalently
$\lim\limits_{s\rightarrow+\infty}\frac{d}{ds}\frac{F(s)}{f(s)}=0$;
\item[ ($f_4$)] there exist $\beta>0$ and $0<\rho<1/4$ such that
$$
\lim\limits_{t\rightarrow+\infty}\frac{tF(t)}{e^{4\pi t^{2}}}\geq \beta>\frac{1}{\rho^2\sqrt{\ln2}\cdot\pi^\frac{3}{2}}.
$$
\end{itemize}
Notice that from ($f_3$), we deduce that there exist $\tau_0\in(0,1)$ and
$t_0>0$ such that $\frac{F(t)f'(t)}{f^2(t)}\geq \tau_0$ for any $t\geq t_0$, which in turn implies that
$$
\int_{t_0}^t\frac{f'(s)}{f(s)}{\rm d}s\geq \tau_0\int_{t_0}^t\frac{f(s)}{F(s)}{\rm d}s.
$$
So, from ($f_1$) we have that there exist $M_0>0$ and
$t_0>0$ such that
\begin{equation}\label{eqn:f5}
F(t)\leq M_0|f(t)|,\quad  |t|\geq t_0.
\end{equation}
\noindent Condition $(f_1)$ is the usual exponential growth assumption as prescribed by the Trudinger-Moser inequality, which gives the following
\begin{equation}\label{eqn:f1}
    0\leq F(t)\leq C\cdot\left\{\begin{array}{ll}
    t^2,&  t\leq \bar{t},\\
    t^{p-1}e^{4\pi t^2}, & t\geq \tilde{t}
  \end{array}
  \right.
\end{equation}
for some $\tilde{t}>\bar{t}>0$. Assumptions ($f_2$) and ($f_3$) have been introduced in \cite{Cassani21} and turn out to be the key ingredients in order to prove boundedness of Palais-Smale sequences. In fact, the difficulty here to obtain the boundedness of Palais-Smale sequences is due to the presence of a non-homogenous nonlinearity $f$ for which the Ambrosetti-Rabinowitz condition fails to work. Assumption ($f_4$)  is in the spirit of de Figueiredo-Miyagaki-Ruf condition \cite{Figueiredo95} which in dimension two turns out to be the key ingredient in order to have compactness. (It is somehow the equivalent of the Brezis-Nirenberg condition for the mountain pass level $c<\frac{1}{N}S^{N/2}$ in higher dimensions $N\geq 3$, where $S$ is the optimal constant for the critical Sobolev embedding $H^1\hookrightarrow L^{2^*}$.)

\medskip
\noindent An explicit example of function $F(t)$ satisfying our set of assumptions is the following:
\begin{equation}\label{eqn:f1+}
  F(t)=\frac{t^2}{\ln\frac{1}{\ln(1+t)}}e^{4\pi t^2},\quad t\in(0,+\infty).
\end{equation}
Let us point out that the result of \cite{Cassani21} does not cover this kind of nonlinearities and in this respect, the approach developed here is an improvement which allows for more general nonlinearities.

\medskip

\noindent Our main result is the following:
\begin{theorem}\label{Thm:Existence}
Suppose the nonlinearity $f$ satisfies $(f_1)$--$(f_4)$. Then, problem (\ref{eqn:log-choquard0}) possesses a positive solution $u\in H^{1}(\R^2)$.
\end{theorem}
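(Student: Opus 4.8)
The plan is to follow the asymptotic approximation strategy announced in the title: rather than work directly with the ill-defined functional $I$ in \eqref{eqn:log-fun}, I would first regularize the logarithmic kernel. A natural choice is to replace $\ln\frac{1}{|x|}$ by a family of kernels, e.g. the Bessel-type or truncated kernels $K_\eps(x)$ (or the Riesz kernels $|x|^{-\eps}/\eps$ shifted by a constant) that converge to $\ln\frac{1}{|x|}$ as $\eps\to 0^+$ and for which the associated convolution term \emph{is} well-defined and $C^1$ on all of $H^1(\R^2)$. For each fixed $\eps>0$ one then has a genuinely variational problem on $H^1(\R^2)$: the regularized functional $I_\eps$ possesses a mountain pass geometry (using $f(t)=o(t)$ at $0$ from $(f_1)$ for the local structure near the origin, and $(f_4)$ to push the test function past the mountain), so one obtains a Palais--Smale sequence at the mountain pass level $c_\eps$.

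Next I would establish uniform control. Using $(f_2)$--$(f_3)$ — exactly as in \cite{Cassani21}, where these replace the failing Ambrosetti--Rabinowitz condition — I would show that Palais--Smale sequences for $I_\eps$ are bounded in $H^1(\R^2)$, with bounds uniform in $\eps$. The key algebraic input is the consequence \eqref{eqn:f5}, $F(t)\le M_0|f(t)|$ for large $t$, which lets one dominate the ``bad'' parts of $F$ by $f$ and thereby trap the norm via the Nehari-type identity $\langle I_\eps'(u),u\rangle=0$ combined with the energy bound $I_\eps(u)\le c_\eps$. I would also need the level estimate $c_\eps\le c_* +o(1)$ for an explicit threshold $c_*$ below which compactness holds; here $(f_4)$, via de Figueiredo--Miyagaki--Ruf-type test functions (Moser functions concentrated at a point), produces the strict inequality, and the weighted Trudinger--Moser inequality of \cite{Cassani21} controls the exponential nonlinearity. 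Then, extracting a subsequence $u_\eps\rightharpoonup u$ in $H^1(\R^2)$, I would pass to the limit in the equation. To exclude vanishing and dichotomy of the $u_\eps$ I would invoke a concentration-compactness argument together with the strict level bound; a translation may be needed to recover a nontrivial weak limit, which is then a weak solution of \eqref{eqn:log-choquard0}. Positivity follows since $f\equiv 0$ on $(-\infty,0]$ by $(f_1)$: testing against $u^-$ gives $u\ge 0$, and then the maximum principle (or a Harnack inequality for $-\triangle u+u\ge 0$) upgrades to $u>0$.

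The main obstacle I expect is the passage to the limit $\eps\to 0$ in the nonlocal term $\int\!\!\int K_\eps(x-y)F(u_\eps(y))F(u_\eps(x))$ and in its derivative. The logarithmic kernel is neither positive nor bounded, so one cannot simply use monotone or dominated convergence; one must split the kernel into a bounded part and a singular (near-diagonal) part, control the singular part by a Hardy--Littlewood--Sobolev or Trudinger--Moser estimate uniform in $\eps$, and use the strong $L^q_{\mathrm{loc}}$ convergence of $F(u_\eps)$ (from Rellich together with the exponential integrability bound) to identify the limit. A secondary delicate point is ensuring that the example nonlinearity \eqref{eqn:f1+} — which is \emph{not} covered by \cite{Cassani21} — genuinely satisfies $(f_1)$--$(f_4)$, in particular verifying $(f_2)$ and $(f_3)$ for $F(t)=t^2 e^{4\pi t^2}/\ln\frac{1}{\ln(1+t)}$; this is a direct but somewhat intricate computation of $Ff'/f^2$ as $t\to\infty$, which I would record as a lemma. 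Once these two points are handled, assembling the mountain pass scheme, the uniform bounds, the level estimate, and the limit passage yields the positive solution $u\in H^1(\R^2)$.
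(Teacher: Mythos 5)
Your overall skeleton coincides with the paper's: the regularization $G_\alpha(x)=\frac{|x|^{-\alpha}-1}{\alpha}\to\ln\frac{1}{|x|}$ is exactly the one the paper uses, and the chain ``mountain pass geometry for the regularized functional $\to$ uniform boundedness of (Cerami) sequences via $(f_2)$--$(f_3)$ and the test function $F(u)/f(u)$ $\to$ level estimate below $\tfrac12$ via Moser functions and $(f_4)$ $\to$ limit passage as the parameter tends to $0$'' is the paper's architecture.

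There is, however, a genuine gap in your treatment of the limit in the nonlocal term. You propose to ``split the kernel into a bounded part and a singular (near-diagonal) part,'' but $\ln|x-y|$ is \emph{not} bounded away from the diagonal: it grows without bound as $|x-y|\to\infty$, and this far-field growth is precisely why the energy is ill-defined on $H^1(\R^2)$ in the first place. Controlling $\iint_{|x-y|\ge 1}\ln|x-y|\,F(u_\eps(y))F(u_\eps(x))\,{\rm d}x\,{\rm d}y$ uniformly in $\eps$ cannot be achieved by Hardy--Littlewood--Sobolev or Trudinger--Moser estimates alone; one needs pointwise spatial decay of $u_\eps$ at infinity, uniform in the approximation parameter. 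The paper obtains this by (i) working in $H^1_r(\R^2)$ from the outset (legitimate because positive solutions are radial by \cite{Cingolani22}), so the radial lemma of \cite{Berestycki83} gives $|u_\alpha(x)|\le C|x|^{-1/2}$ with $C$ independent of $\alpha$, and then (ii) a comparison argument yielding the uniform exponential decay $u_\alpha(x)\le M e^{-|x|/2}$ for $|x|\ge R$ (Lemma \ref{Lem:R2}), which makes $|y|F(u_\alpha(y))$ integrable and permits dominated convergence on $\{|x-y|\ge1\}$. Your plan stays in the full space $H^1(\R^2)$ and appeals to concentration-compactness with translations; it supplies no such decay, and without it the far-field part of the kernel is out of control. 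Two smaller points: you invoke the weighted Trudinger--Moser inequality of \cite{Cassani21}, whereas the whole point of this approach is to avoid the weighted space --- only the standard inequality \eqref{Lem:MTI} on $H^1(\R^2)$ is used; and verifying that the example \eqref{eqn:f1+} satisfies $(f_1)$--$(f_4)$, while worthwhile, is not part of the proof of the theorem.
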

\noindent We observe that as a consequence of \cite{Cingolani22}, the solution obtained in Theorem \ref{Thm:Existence} is actually radially symmetric, up to translations, and strictly decreasing.

\medskip

\noindent From the point of view of applications, equation \eqref{eqn:log-choquard0} boasts a longstanding presence in quite different nonlinear contexts, from the early studies on Polarons by Fr\"ohlich in the '30s, to more recent applications to quantum gravity and plasma physics; see \cite{MV} and references therein. The mathematical success, iniziated by Lieb in the '70s \cite{Lieb} (let us mention that in the last two years more than 200 papers have been devoted to this topic) is due to the richness of the framework with challenges which range form Functional Analysis, to local and nonlocal systems of PDEs and Potential Theory, see \cite{Bucur2022, Cingolani22}. Let us emphasize that a major difficulty in studying the so-called limiting case \eqref{eqn:log-choquard0}, is the lack of a proper function space setting.  The approach developed in  \cite{Cassani21} in dimension two and the extended to any dimension in \cite{Bucur2022}, consists of introducing a logarithmic weight on the mass part of the Sobolev norm, namely  
\begin{equation}\label{wn}
\|u\|^2:=\|u\|_{H^1}^2+\bigg(\int_{\R^2}|u|^q\log(1+|x|){\rm d}x\bigg)^{\frac{2}{q}},\quad q>2,
\end{equation}
and completing smooth compactly supported functions with respect to this norm. This, together with a suitable version of the Trudinger-Moser inequatlity yields a weighted Sobolev space in which the energy functional is well-defined and of class $C^1$. This opens the way to variational methods which provide mountain pass type solutions to \eqref{eqn:log-choquard0} which have finite energy in terms of \eqref{wn}. However, observe that the norm \eqref{wn} is not invariant under translations whereas the
energy functional $I$ does. Moreover, the quadratic part of the energy functional is never coercive in that context.  
Because of the above reasons, the authors in \cite{Cassani21,Bucur2022} can consider nonlinearities $f$ whose asymptotic behavior near the origin is given by $s^q$, with $q>1$, which plays an crucial role in proving the boundedness of PS sequences. A major purpose of this paper is to remove this restriction and allow to consider the wider range of super-linear nonlinearities, that is $f(t)=o(t)$, as $t\rightarrow0$.

\medskip

\noindent Here we exploit an asymptotic approximation procedure developed in \cite{LRTZ2021} to set the problem (\ref{eqn:log-choquard0}) in the standard Sobolev space $H^1(\R^2)$. (See also \cite{BV} for further penalization methods in the case of power-like nonlinearities.)
 In order to overcome the difficulty of the sign-changing Newtonian kernel, we modify equation (\ref{eqn:log-choquard0}) as follows
\begin{equation}\label{eqn:2-spos4}
    -\triangle {u}+ u-\bigg[G_{\alpha}(x)\ast F(u)\bigg]f(u)=0, \quad \mbox{ in }\,\, \R^2,
\end{equation}
where $\alpha\in(0,1)$ and
$$
\lim\limits_{\alpha\rightarrow0^+}G_{\alpha}(x):=\lim\limits_{\alpha\rightarrow0^+}\frac{|x|^{-\alpha}-1}{\alpha}=-\ln|x|
$$
for $x\in\R^2\setminus\{0\}$.
The corresponding energy functional to (\ref{eqn:2-spos4})
is well defined in $H^1(\R^2)$ for fixed $\alpha\in(0,1)$,
which enables us to use minimax methods to establish the existence of
positive solutions for (\ref{eqn:2-spos4}).

\noindent By passing to the limit as $\alpha\rightarrow0^+$, a convergence argument within $H^1(\R^2)$ allows us to prove that the limit solution turns out to be a positive solutions to the original problem (\ref{eqn:log-choquard0}). Here the main difficulty is the balance between the too loose sign-changing logarithm kernel and the exponential growth rate of a fairly general nonlinearity. Some uniform bounds with respect to the asymptotic approximation parameter $\al$ turn out to be the key in order to get compactness and then existence of a finite energy (in the $H^1$ sense) solution. We are confident that the methods developed here might be useful in other contexts.

\section{Preliminaries}\label{sec2}
\noindent For $1\leq s\leq +\infty$, we denote by $\|\cdot\|_s$ the usual
norm of the Lebesgue space $L^s(\R^2)$ as well as
$
H^1(\R^2):=\{u\in L^2(\R^2):\nabla u\in L^2(\R^2)\}
$
is the usual Sobolev space endowed with the norm
$\|u\|:=\left(\int_{\R^2}(|\nabla u|^2+u^2){\rm d}x\right)^{\frac{1}{2}}$. In the sequel, when it is not misleading, constants $C$ may change value from line to line. 

\noindent Let us next recall some basic facts starting with the Hardy-Littlewood-Sobolev inequality, see for instance \cite{Bennett88}, which will be frequently used throughout this paper.
\begin{lemma}(Hardy-Littlewood-Sobolev inequality)\label{Lem:HLS}
Let $s,r>1$ and $\alpha\in(0,N)$ with $1/s+\alpha/N+1/r=2$, $f\in L^s(\R^N)$ and $h\in L^r(\R^N)$. There exists a sharp constant
$C_{s,N,\alpha,r}$ independent of $f,h$ such that
$$
\int_{\R^N}\bigg[\frac{1}{|x|^\alpha}\ast f(x)\bigg]h(x){\rm d}x\leq C_{s,N,\alpha,r}\|f\|_s\|h\|_r.
$$
If $t=s=\frac{2N}{2N-\alpha}$, then
$$
C_{s,N,\alpha,r}=C_{N,\alpha}=\pi^{\alpha/2}\frac{\Gamma(\frac{N}{2}-
\frac{\alpha}{2})}{\Gamma(N-\frac{\alpha}{2})}\bigg\{\frac{\Gamma(\frac{N}{2})}{\Gamma(N)}\bigg\}^{-1+\frac{\alpha}{N}},
$$
and if $N=2,\alpha\in(0,1]$, then $C_{2,\alpha}\leq 2\sqrt{\pi}$.
\end{lemma}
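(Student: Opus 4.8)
The plan is to establish the three assertions in turn: the inequality itself with a finite constant, the explicit value of the sharp constant in the conformal case $s=r=\frac{2N}{2N-\alpha}$, and finally the numerical bound in dimension two.

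First I would reduce the bilinear estimate to a mapping property of the Riesz potential. By duality the claimed inequality is equivalent to the boundedness
$$
\big\||x|^{-\alpha}\ast f\big\|_{r'}\leq C\|f\|_s,\qquad \frac{1}{r'}=\frac1s+\frac{\alpha}{N}-1\in(0,1),
$$
where $r'$ is the conjugate exponent of $r$; the exponent relation here is precisely $1/s+\alpha/N+1/r=2$. The kernel $|x|^{-\alpha}$ lies in the weak Lebesgue space $L^{N/\alpha,\infty}(\RN)$, with quasinorm $\omega_N^{\alpha/N}$ (where $\omega_N=|B_1|$ is the volume of the unit ball), since its distribution function is $\lambda\mapsto\omega_N\lambda^{-N/\alpha}$. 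The weak Young convolution inequality of O'Neil --- proved by splitting the kernel into its sub- and super-level parts relative to a threshold tuned to the target value and applying Young's and Chebyshev's inequalities, equivalently by Marcinkiewicz interpolation --- then yields the displayed bound with a finite constant depending only on $s,N,\alpha,r$, the hypotheses $s>1$ and $r>1$ keeping us strictly inside the strong-type range.

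For the sharp constant in the diagonal case $s=r=\frac{2N}{2N-\alpha}$ I would follow Lieb's sharp Hardy--Littlewood--Sobolev theorem. Since $|x|^{-\alpha}$ is symmetric decreasing, the Riesz rearrangement inequality shows the bilinear form does not decrease when $f$ and $h$ are replaced by their symmetric decreasing rearrangements, so one may test the supremum on nonnegative symmetric decreasing functions. In this case the functional is invariant under the conformal group, and the cleanest route to the extremizer is the competing-symmetries argument of Carlen--Loss: alternately applying symmetric decreasing rearrangement and the conformal inversion $f(x)\mapsto|x|^{-(2N-\alpha)}f(x/|x|^2)$ produces a sequence along which the quotient is nondecreasing and which converges to the common fixed point $(1+|x|^2)^{-(2N-\alpha)/2}$. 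Evaluating the resulting Beta-type integral over $\RN$ identifies the maximizer and gives the stated value
$$
C_{N,\alpha}=\pi^{\alpha/2}\,\frac{\Gamma\!\big(\tfrac N2-\tfrac\alpha2\big)}{\Gamma\!\big(N-\tfrac\alpha2\big)}\bigg\{\frac{\Gamma(\tfrac N2)}{\Gamma(N)}\bigg\}^{-1+\frac\alpha N}.
$$

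Finally, specializing to $N=2$ and $\alpha\in(0,1]$, the bracketed factor is $\{\Gamma(1)/\Gamma(2)\}^{-1+\alpha/2}=1$, and using $\Gamma(2-\tfrac\alpha2)=(1-\tfrac\alpha2)\Gamma(1-\tfrac\alpha2)$ the constant collapses to
$$
C_{2,\alpha}=\frac{\pi^{\alpha/2}}{1-\frac\alpha2}=\frac{2\,\pi^{\alpha/2}}{2-\alpha}.
$$
The bound $C_{2,\alpha}\leq 2\sqrt\pi$ is then equivalent to $\pi^{(\alpha-1)/2}\leq 2-\alpha$, which holds on $(0,1]$ because the left-hand side is at most $1$ while the right-hand side is at least $1$, with equality only at $\alpha=1$. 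I expect the genuinely hard step to be the sharp constant: the rearrangement reduction together with the conformal symmetrization, namely the existence and identification of the extremizer, is the substantive part, whereas the basic inequality is standard interpolation and the final two-dimensional estimate is elementary.
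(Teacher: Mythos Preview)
Your proposal is correct. The three parts are handled in the standard way: the basic inequality via O'Neil's weak Young inequality (equivalently Marcinkiewicz interpolation applied to the Riesz potential), the sharp diagonal constant via Lieb's theorem using rearrangement and the Carlen--Loss competing-symmetries argument, and the explicit simplification in dimension two, which you compute cleanly. The final numerical step is also right: $C_{2,\alpha}=2\pi^{\alpha/2}/(2-\alpha)$, and $\pi^{(\alpha-1)/2}\le 1\le 2-\alpha$ on $(0,1]$ gives $C_{2,\alpha}\le 2\sqrt{\pi}$.

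As for the comparison: the paper does not prove this lemma at all. It is stated as a recalled fact with a reference to Bennett--Sharpley, \emph{Interpolation of Operators}, and then used freely throughout. So your write-up goes well beyond what the paper itself does; you are supplying the argument behind a quoted result. If anything, note that the specific sharp constant and the Carlen--Loss symmetrization are due to Lieb and to Carlen--Loss respectively, rather than to the cited interpolation text, so if you keep this proof you might adjust the attribution accordingly.
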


\noindent As afore mentioned, in dimension two the maximal degree of summability for functions with membership in $H^1(\mathbb{R}^2)$ is of exponential type, for which we recall the first result available in the whole plane due to \cite{Cao92}:
\begin{equation}\label{Lem:MTI}
\sup_{\|\nabla u\|_2\leq 1,\, \|u\|_2\leq M}\int_{\R^2}\left(e^{\alpha|u|^2}-1\right)\,{\rm d}x\leq C(\alpha)\|u\|_2<\infty, \text{ if } \alpha<4\pi.
\end{equation}
(See also \cite{CST} for a wide context on this topic.)
\begin{lemma}\label{Lem:jianjin}
%The following hold.
%\begin{itemize}
%\item[$i)$]
Assume that $u_n\rightarrow u$ in $H^1(\R^2)$,
then there exists $C>0$ independent of $n$ such that
$$
\int_{\R^2}\frac{F(u_n(y))}{|x-y|^{\nu}}{\rm d}y\leq C
$$
uniformly for $x\in\R^2$, where $\nu\in(0,1)$.
%\item[$ii)$] Assume $u\in H^1(\R^2)$, then we have for any $\nu\in(0,1)$
%$$
%\int_{\R^2}\frac{F(u(y))}{|x-y|^{\nu}}{\rm d}y-\frac{\int_{\R^2}F(u(y)){\rm d}y}{|x|^{\nu}}\rightarrow0,\quad\text{as}\,\,|x|\rightarrow+\infty.
%$$
%\end{itemize}
\end{lemma}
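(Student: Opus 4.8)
The plan is to convert the pointwise (in $x$) bound into two uniform Lebesgue estimates on $F(u_n)$, and then into a uniform exponential–integrability estimate which is obtained by isolating the fixed limit $u$ from a vanishing remainder.

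First I would split the kernel as $|x-y|^{-\nu}=|x-y|^{-\nu}\chi_{B_1(x)}(y)+|x-y|^{-\nu}\chi_{\R^2\setminus B_1(x)}(y)$. Since $\nu\in(0,1)$, the function $z\mapsto|z|^{-\nu}\chi_{B_1(0)}(z)$ lies in $L^{q}(\R^2)$ for every $q<2/\nu$, while the second piece is bounded by $1$; fixing $q\in(1,2/\nu)$ and setting $a:=q'$ (so that $a>2/(2-\nu)$, and $a\in(1,2)$ may be arranged), Young's convolution inequality gives, uniformly in $x$,
\[
\int_{\R^2}\frac{F(u_n(y))}{|x-y|^{\nu}}\,\ud y\ \le\ \big\||\cdot|^{-\nu}\chi_{B_1(0)}\big\|_{q}\,\|F(u_n)\|_{a}+\|F(u_n)\|_{1}.
\]
Thus it is enough to bound $\|F(u_n)\|_{1}$ and $\|F(u_n)\|_{a}$ uniformly in $n$. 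From \eqref{eqn:f1}, after absorbing the polynomial factor into the exponential, for every $b\ge1$ and $\epsilon>0$ one has $F(t)^{b}\le C\,|t|^{2b}+C\big(e^{4\pi b(1+\epsilon)t^{2}}-1\big)$ on $\R$; integrating this with $b=1$ and $b=a$, and using that $\{u_n\}$ is bounded in $H^{1}(\R^2)$, hence in every $L^{r}(\R^2)$ with $r<\infty$, the lemma reduces to proving that
\[
\sup_{n}\int_{\R^2}\big(e^{\mu u_n^{2}}-1\big)\,\ud y<\infty\qquad\text{for every fixed }\mu>0.
\]

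To prove this I would exploit the strong convergence. Write $u_n=u+w_n$ with $\|w_n\|_{H^1}\to0$, and fix $\epsilon>0$ so that $u_n^{2}\le(1+\epsilon)u^{2}+(1+\epsilon^{-1})w_n^{2}$ and, symmetrically, $u_n^{2}\le(1+\epsilon)w_n^{2}+(1+\epsilon^{-1})u^{2}$. On $\{|u_n|\le1\}$ one has $e^{\mu u_n^{2}}-1\le(e^{\mu}-1)u_n^{2}$, whose integral is $\le(e^{\mu}-1)\|u_n\|_{2}^{2}\le C$. The set $\{|u_n|>1\}$ has measure $\le\|u_n\|_{2}^{2}\le C$ and is contained in $\{|u|>1/2\}\cup\{|w_n|>1/2\}$, so it remains to bound $\int_{\{|u|>1/2\}}e^{\mu u_n^{2}}$ and $\int_{\{|w_n|>1/2\}}e^{\mu u_n^{2}}$. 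On the first (fixed, finite–measure) set I use $e^{\mu u_n^{2}}\le e^{\mu(1+\epsilon)u^{2}}e^{\mu(1+\epsilon^{-1})w_n^{2}}$ together with H\"older's inequality: the factor carrying $u$ is a finite constant because $e^{cu^{2}}-1\in L^{1}(\R^2)$ for every fixed $u\in H^{1}(\R^2)$ and $c>0$ (a standard consequence of the Trudinger--Moser inequality), and the factor carrying $w_n$ is bounded uniformly in $n$ since, for $n$ large, $c\|w_n\|_{H^1}^{2}\le4\pi$, whence by the sharp Trudinger--Moser inequality in $\R^2$, $\int_{\R^2}(e^{cw_n^{2}}-1)\le\sup_{\|g\|_{H^1}\le1}\int_{\R^2}(e^{4\pi g^{2}}-1)<\infty$. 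On the set $\{|w_n|>1/2\}$, which has measure tending to $0$, I interchange the roles of $u$ and $w_n$: the $w_n$–factor is bounded as just explained, while the $u$–factor $\int_{\{|w_n|>1/2\}}(e^{cu^{2}}-1)$ tends to $0$ by absolute continuity of the integral. Summing these contributions yields the desired uniform bound for all large $n$, and the finitely many remaining indices are trivially finite.

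The main obstacle is the one already visible in the first step: the H\"older pairing against $|x-y|^{-\nu}$, combined with the polynomial prefactor in the growth of $F$, forces an exponent $\mu>4\pi$, so the subcritical inequality \eqref{Lem:MTI} cannot be applied to $u_n$ directly. The way around it is precisely the decomposition $u_n=u+w_n$ and the strong convergence in $H^1$: the fixed profile $u$ has finite exponential integrals at every exponent, whereas the remainder $w_n$ is so small in $H^1$ that, after rescaling, any prescribed exponent falls below the critical threshold $4\pi$ and the whole–space Trudinger--Moser inequality applies with a uniform constant. All the remaining steps — the kernel splitting, the absorption of polynomials into the exponential, the Sobolev embeddings, and the H\"older estimates — are routine.
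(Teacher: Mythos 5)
Your proof is correct and follows essentially the same route as the paper's: split the kernel at $|x-y|=1$, bound the far part by the uniform $L^1$ bound on $F(u_n)$, bound the near part by H\"older against a higher Lebesgue norm of $F(u_n)$, and reduce everything to the uniform exponential integrability $\sup_n\int_{\R^2}(e^{\mu u_n^2}-1)\,{\rm d}x<\infty$ for arbitrary $\mu>0$. The only difference is that the paper takes $a=2$ (Cauchy--Schwarz, with the exponent $9\pi$) and outsources that last uniform bound to \cite{do14}, whereas you prove it directly via the decomposition $u_n=u+w_n$ and the Trudinger--Moser inequality applied to the small remainder, which is a correct, self-contained substitute for the citation.
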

\Proof
By H\"{o}lder's inequality, ($f_1$) and ($f_2$), one has
$$
\int_{\R^2}F(u_n(y)){\rm d}y<+\infty
$$
uniformly for $n$, since $u_n\rightarrow u$ in $H^1(\R^2)$ (see \cite{do14}). It then follows from ($f_1$) and ($f_2$) that
\begin{equation}\label{eqn:2-F}
\aligned
\int_{\R^2}\frac{F(u_n(y))}{|x-y|^{\nu}}{\rm d}y
&=\int_{|x-y|\leq1}\frac{F(u_n(y))}{|x-y|^{\nu}}{\rm d}y+\int_{|x-y|\geq1}\frac{F(u_n(y))}{|x-y|^{\nu}}{\rm d}y\\
&\leq C+\bigg(\int_{|x-y|\leq1}\frac{1}{|x-y|^{2\nu}}{\rm d}y\bigg)^{1/2}\cdot\bigg(\int_{\R^2}F^2(u_n(y)){\rm d}y\bigg)^{1/2}\\
&\leq C+C\bigg(\int_{\R^2}|u_n|^4+(e^{9\pi |u_n|^2}-1){\rm d}y\bigg)^{1/2}\\
&\leq C,
\endaligned
\end{equation}
where the last inequality follows from the fact that $u_n\rightarrow u$ in $H^1(\R^2)$.
\qed
%This proves $i)$.
%
%
%\noindent Now consider $|x|\geq1$ and note that
%$$
%\int_{\R^2}\frac{F(u(y))}{|x-y|^{\nu}}{\rm d}y-\frac{\int_{\R^2}F(u(y)){\rm d}y}{|x|^{\nu}}
%=\int_{\R^2}\bigg(\frac{1}{|x-y|^{\nu}}-\frac{1}{|x|^{\nu}}\bigg)F(u(y)){\rm d}y.
%$$
%Define $$h(x,y):=\frac{1}{|x-y|^{\nu}}-\frac{1}{|x|^{\nu}}$$ for $x,y\in\R^2$
%and observe that for any $y\in\R^2$, $h(x,y)\rightarrow0$, as $|x|\rightarrow\infty$. Moreover,
%$$
%-1\leq h(x,y)\chi_{|x-y|\geq\frac{1}{2}}(y)\leq2^\nu\quad\text{for}\,\,x,y\in\R^2\,\,\text{with}\,\,|x|\geq1.
%$$
%Since $u\in H^1(\R^2)$, it follows from Lebesgue's Theorem that
%$$
%\int_{|y-x|\geq\frac{1}{2}}h(x,y)F(u(y)){\rm d}y\rightarrow0,\quad\text{ as }\,\,|x|\rightarrow\infty.
%$$
%Moreover, by $u\in H^1(\R^2)$ we have
%$$
%0\leq \frac{1}{|x|^{\nu}}\int_{|x-y|\leq \frac{1}{2}}F(u(y)){\rm d}y\leq \frac{1}{|x|^{\nu}}\int_{\R^2}F(u(y)){\rm d}y\rightarrow0,\quad\text{as}\,\,|x|\rightarrow\infty,
%$$
%and similarly to \eqref{eqn:2-F},
%$$
%0\leq\int_{|x-y|\leq \frac{1}{2}} \frac{1}{|x-y|^{\nu}}F(u(y)){\rm d}y\rightarrow0,\quad\text{as}\,\,|x|\rightarrow\infty.
%$$
%As a consequence of the above estimates we have that
%$$
%\int_{\R^2}h(x,y)F(u(y)){\rm d}y\rightarrow0\quad\text{as}\,\,|x|\rightarrow\infty,
%$$
%which completes the proof.

\noindent The proof of the following Lemma is standard and we omit it.
\begin{lemma}\label{Lem:G}
For any $\alpha\in(0,1]$, there exists $C_\beta>0$ such that
$$
\frac{s^{-\alpha}-1}{\alpha}\leq C_\beta s^{-\beta}
$$
holds for all $\beta\in(\alpha,+\infty)$ and $s>0$.
\end{lemma}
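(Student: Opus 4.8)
The plan is to split the range of $s$ into the regions $s\ge 1$ and $0<s<1$, since the inequality only carries content in the second one. For $s\ge 1$ one has $s^{-\alpha}\le 1$, so the left-hand side $\frac{s^{-\alpha}-1}{\alpha}$ is non-positive, while the right-hand side $C_\beta s^{-\beta}$ is strictly positive; hence the inequality holds there for \emph{any} choice of $C_\beta>0$, with nothing to prove.

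For $0<s<1$ I would multiply through by $s^{\beta}>0$ and observe that the claim is equivalent to the bound
$$
\frac{s^{\beta-\alpha}-s^{\beta}}{\alpha}=\frac{1}{\alpha}\,s^{\beta-\alpha}\bigl(1-s^{\alpha}\bigr)\le C_\beta .
$$
Since $0<s<1$ and $\beta-\alpha>0$, both factors $s^{\beta-\alpha}$ and $1-s^{\alpha}$ lie in $(0,1)$, so the left-hand side does not exceed $1/\alpha$; thus $C_\beta:=1/\alpha$ (which is $\ge 1$ because $\alpha\in(0,1]$) works uniformly in $\beta$ and $s$, and combining the two regimes proves the lemma.

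If one prefers a constant that exhibits the dependence on $\beta$ suggested by the notation $C_\beta$, I would instead use the elementary inequality $1-e^{x}\le -x$ with $x=\alpha\ln s<0$ to get $1-s^{\alpha}\le-\alpha\ln s$, hence $\frac{s^{-\alpha}-1}{\alpha}\le s^{-\alpha}\ln(1/s)$ on $(0,1)$; after the substitution $t=1/s>1$ the claim reduces to $\ln t\le C_\beta\, t^{\beta-\alpha}$, which is valid with $C_\beta=\sup_{t\ge1}(\ln t)\,t^{-(\beta-\alpha)}=\frac{1}{e(\beta-\alpha)}$ by a routine one-variable optimization. I do not expect any genuine obstacle here: the only point worth flagging is that the estimate is vacuous for $s\ge 1$, so that everything reduces to the elementary bound $s^{\beta-\alpha}\bigl(1-s^{\alpha}\bigr)\le 1$ on the unit interval.
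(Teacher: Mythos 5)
The paper declares this lemma ``standard'' and omits the proof, so there is no argument of the authors' to compare against; your proof is essentially correct, and the reduction to $0<s<1$ (the case $s\ge1$ being vacuous) is the right first step. One caveat is worth making explicit. Your first choice $C_\beta=1/\alpha$ does satisfy the lemma as literally quantified (with $\alpha$ fixed before $C_\beta$ is chosen), but it degenerates as $\alpha\to0^+$, and everywhere the lemma is actually invoked in the paper --- for instance in Lemma \ref{Lem:jianjin+}, in \eqref{eqn:re1} and in \eqref{eqn:Th7} --- the constant must be uniform in $\alpha$ precisely in the limit $\alpha\to0^+$, since that limit is the whole point of the asymptotic approximation. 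So that version of the constant, while formally admissible, is unusable for the paper's purposes. Your second variant, based on $1-e^{x}\le-x$ and the elementary optimization giving $C=\frac{1}{e(\beta-\alpha)}$, is the one to keep: for fixed $\beta$ it stays bounded as $\alpha\to0^+$. For the record, one can even get a constant independent of $\alpha$ altogether: for $0<s<1$ the map $\alpha\mapsto\frac{s^{-\alpha}-1}{\alpha}=\frac{e^{\alpha\ln(1/s)}-1}{\alpha}$ is nondecreasing in $\alpha$ because $x\mapsto\frac{e^{x}-1}{x}$ is nondecreasing on $(0,\infty)$, whence
$$
\frac{s^{-\alpha}-1}{\alpha}\le\frac{s^{-\beta}-1}{\beta}\le\frac{1}{\beta}\,s^{-\beta}
\qquad\text{for all }\beta>\alpha,
$$
so $C_\beta=1/\beta$ works uniformly in $\alpha\in(0,\beta)$.
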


\section{The asymptotic approximation method}
\subsection{The modified equation}\setcounter{equation}{0}
\noindent Recently in \cite{Cingolani22}, the authors used the method of moving plans to prove that
positive solutions of (\ref{eqn:log-choquard0}) are radially symmetric. Based on this fact, it is natural to restrict ourself to the space $H_r^1(\R^2)$ of radially symmetric functions.

\medskip

\noindent Set $G_\alpha(x)=\frac{|x|^{-\alpha}-1}{\alpha}$, $\alpha\in(0,1)$, $x\in\R^2$ and consider the following equation
\begin{equation}\label{eqn:2-spos}
    -\triangle {u}+ u=\bigg[G_{\alpha}(x)\ast F(u)\bigg]f(u)
\end{equation}
which has the associated energy functional
$$
I_{\alpha}(u)=\frac{1}{2}\int_{\R^2}\bigg(|\nabla u|^2+u^2\bigg){\rm d}x+\frac{1}{2\alpha}
\bigg[\int_{\R^2}F(u){\rm d}x\bigg]^2-\frac{1}{2\alpha}\int_{\R^2}\bigg[\frac{1}{|x|^{\alpha}}*F(u)\bigg]F(u){\rm d}x.
$$

\subsection{Regularity of the modified energy functional}

According to the definition of $G_{\alpha}$, by means of the Hardy-Littleword-Sobolev inequality, let us prove the following % Let us define for any $u\in H_r^1(\R^2)$,
%$$
%I_{\alpha}(u)=:\frac{1}{2}\int_{\R^2}|\nabla u|^2+u^2{\rm d}x+\mathcal{I}_1-\mathcal{I}_2.
%$$

%\begin{lemma}\label{Lem:functional}
%The functional $I_\al$ is well-defined on $H_r^1(\R^2)$ and $I_\al\in C(H_r^1(\R^2),\R)$ for fixed $\al>0$.
%\end{lemma}
%\Proof
%For any fixed $u\in H_r^1(\R^2)$, we deduce from ($f_1$) and ($f_2$), H\"{o}lder's inequity that
%$$
%\aligned
%\int_{\R^2}F(u){\rm d}x&\leq C\int_{\R^2}u^2{\rm d}x+C\int_{\R^2}u^{p+1}e^{4\pi u^2}{\rm d}x\\
%&\leq C\|u\|^2+C\bigg(\int_{\R^2}u^{2(p+1)}{\rm d}x\bigg)^{1/2}\bigg(\int_{\R^2}e^{8\pi u^2}-1{\rm d}x\bigg)^{1/2}\\
%&<\infty.
%\endaligned
%$$
%On the other hand, by the Hardy-Littleword-Sobolev inequality, one has
%$$
%\aligned
%\mathcal{I}_2&:=\frac{1}{4\pi\alpha}\int_{\R^2}\frac{1}{|x-y|^{\alpha}}*F(u)F(u){\rm d}x\\
%&\leq \frac{1}{4\pi\alpha}\bigg(\int_{\R^2}F(u)^{\frac{4}{4-\al}}{\rm d}x\bigg)^{\frac{4-\al}{2}}\\
%&\leq \frac{1}{4\pi\alpha}C\bigg(\int_{\R^2} |u|^{\frac{8}{4-\al}}{\rm d}x+\int_{\R^2}e^{\frac{16\pi}{4-\al}|u|^2}-1{\rm d}x\bigg)^{\frac{4-\al}{2}}\\
%&<\infty.
%\endaligned
%$$
%Based on the above facts, $I_\al$ is well-defined on $H_r^1(\R^2)$ for fixed $\al>0$. The proof is complete.
%\qed

\begin{lemma}\label{Lem:functional2}
Let $\al>0$, then $I_{\alpha}\in C^1(H_r^1(\R^2),\R)$ and
$$
 I'_{\alpha}(u)v= \int_{\R^2}\bigg(\nabla u\nabla v+uv\bigg){\rm d}x+\frac{1}{\alpha}\int_{\R^2}F(u){\rm d}x\cdot\int_{\R^2}f(u)v{\rm d}x
 -\frac{1}{\alpha}\int_{\R^2}\bigg[\frac{1}{|x|^{\alpha}}*F(u)\bigg]f(u)v{\rm d}x
$$
for any $u,v\in H_r^1(\R^2)$.
\end{lemma}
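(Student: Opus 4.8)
The plan is to split $I_\alpha$ into its quadratic part $u\mapsto\frac12\|u\|^2$, which is classically of class $C^1$ on $H_r^1(\R^2)$, and the two nonlinear functionals
$$
\Phi_1(u)=\frac{1}{2\alpha}\Big[\int_{\R^2}F(u)\,{\rm d}x\Big]^2,\qquad
\Phi_2(u)=\frac{1}{2\alpha}\int_{\R^2}\Big[\frac{1}{|x|^{\alpha}}\ast F(u)\Big]F(u)\,{\rm d}x,
$$
and to show each is well defined and of class $C^1$. For finiteness: by $(f_1)$, equivalently \eqref{eqn:f1}, $F$ grows at most quadratically near the origin and at most like $t^{p-1}e^{4\pi t^2}$ at infinity, while for every fixed $w\in H^1(\R^2)$ and every $\beta>0$ one has $(e^{\beta w^2}-1)\in L^1(\R^2)$ — the Trudinger--Moser fact \eqref{Lem:MTI} (see \cite{Cao92,do14}) already used in Lemma \ref{Lem:jianjin}. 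Hölder's inequality then gives $F(u)\in L^{s}(\R^2)$ for every $s\in[1,+\infty)$, so $\int_{\R^2}F(u)\,{\rm d}x<\infty$ and $\Phi_1$ is finite; and since $\frac{2N}{2N-\alpha}=\frac{4}{4-\alpha}\in(1,\tfrac43)$ for $\alpha\in(0,1)$, Lemma \ref{Lem:HLS} with $s=r=\frac{4}{4-\alpha}$ gives $|\Phi_2(u)|\le C\|F(u)\|_{4/(4-\alpha)}^2<\infty$.

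Next I would prove Gâteaux differentiability. Fix $u,v\in H_r^1(\R^2)$. Using that $(f_1)$ yields $f(s)\le C(s+s^{p})e^{4\pi s^2}$ for $s\ge0$ and that $(u+\theta v)^2\le2(u^2+v^2)$, one gets, uniformly in $\theta\in[0,1]$ and $|t|\le1$, a pointwise bound $|f(u+\theta v)\,v|\le C\,P(|u|,|v|)\,e^{8\pi u^2}e^{8\pi v^2}\,|v|$ with $P$ polynomial, and the right-hand side lies in $L^1(\R^2)$ by Hölder's inequality, the embedding $H^1(\R^2)\hookrightarrow L^s(\R^2)$, $2\le s<\infty$, and the $L^s$-finiteness recalled above. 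Since $t^{-1}\big(F(u+tv)-F(u)\big)\to f(u)v$ pointwise, dominated convergence shows $u\mapsto\int_{\R^2}F(u)\,{\rm d}x$ is Gâteaux differentiable with derivative $v\mapsto\int_{\R^2}f(u)v\,{\rm d}x$; the chain rule for $s\mapsto s^2$ gives $\Phi_1'(u)v=\frac1\alpha\int_{\R^2}F(u)\,{\rm d}x\cdot\int_{\R^2}f(u)v\,{\rm d}x$. For $\Phi_2$, writing the difference quotient, exploiting the symmetry of the Riesz kernel $|x|^{-\alpha}$ to split it into two equal contributions, and passing to the limit with the same domination together with the Hardy--Littlewood--Sobolev bound of Lemma \ref{Lem:HLS}, one obtains $\Phi_2'(u)v=\frac1\alpha\int_{\R^2}\big[\frac{1}{|x|^{\alpha}}\ast F(u)\big]f(u)v\,{\rm d}x$. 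Summing, one recovers exactly the displayed formula for $I_\alpha'(u)v$, which is a bounded linear functional of $v$: the polynomial-type terms by Hölder and $H^1\hookrightarrow L^s$, the nonlocal one because $\big|\int_{\R^2}[\frac{1}{|x|^{\alpha}}\ast F(u)]f(u)v\,{\rm d}x\big|\le C\|F(u)\|_{4/(4-\alpha)}\|f(u)\|_{s}\|v\|$ by Lemma \ref{Lem:HLS} and Hölder, for $s$ large.

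It remains to show $u\mapsto I_\alpha'(u)$ is continuous. If $u_n\to u$ in $H_r^1(\R^2)$, then passing to a subsequence $u_n\to u$ a.e.\ with $|u_n|$ dominated by a fixed $H^1$ function, so the generalized dominated convergence theorem (a.e.\ convergence of $F(u_n),f(u_n)$ together with convergence of the associated Trudinger--Moser-type dominants) yields $F(u_n)\to F(u)$ in $L^1(\R^2)\cap L^{4/(4-\alpha)}(\R^2)$ and $f(u_n)\to f(u)$ in $L^s(\R^2)$ for every $s\in[1,\infty)$; a standard subsequence argument promotes this to the whole sequence. Decomposing $\big(I_\alpha'(u_n)-I_\alpha'(u)\big)v$ for $\|v\|\le1$ into differences of the three nonlinear terms and estimating each by Hölder's inequality, $H^1\hookrightarrow L^s$ and Lemma \ref{Lem:HLS} — for instance
$$
\Big|\int_{\R^2}\Big[\tfrac{1}{|x|^{\alpha}}\ast\big(F(u_n)-F(u)\big)\Big]f(u_n)v\,{\rm d}x\Big|\le C\|F(u_n)-F(u)\|_{4/(4-\alpha)}\|f(u_n)\|_{s}\|v\|
$$
and similarly for the term carrying $f(u_n)-f(u)$ — one concludes $\|I_\alpha'(u_n)-I_\alpha'(u)\|_{(H_r^1(\R^2))^*}\to0$, hence $I_\alpha\in C^1(H_r^1(\R^2),\R)$.

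The only genuine difficulty is the exponential, Trudinger--Moser critical growth of $f$ and $F$: the Nemytskii operators $u\mapsto f(u)$ and $u\mapsto F(u)$ are \emph{not} defined on any fixed $L^s(\R^2)$, so the usual uniform estimates do not apply. The remedy is to exploit that on the \emph{whole} plane $(e^{\beta w^2}-1)\in L^1(\R^2)$ for every fixed $w\in H^1(\R^2)$ and every $\beta>0$ — which makes $\Phi_1,\Phi_2$ finite and supplies the $L^1$-dominant for the difference quotients — and to replace uniform-in-$u$ bounds by a generalized dominated convergence argument in proving continuity of the derivative, precisely the subtlety already met in Lemma \ref{Lem:jianjin}. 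Everything else (the chain rule for $s\mapsto s^2$, the symmetry of $|x|^{-\alpha}$, Hardy--Littlewood--Sobolev for the convolution, and passing to the dual via $H^1\hookrightarrow L^s$) is routine; note it is exactly because we work in the unweighted space $H_r^1(\R^2)$ that the argument stays clean, in contrast with the weighted framework of \cite{Cassani21,Bucur2022}.
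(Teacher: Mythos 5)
Your proposal is correct and follows essentially the same route as the paper: the decisive step in both is the continuity of $I_\alpha'$, obtained from the Hardy--Littlewood--Sobolev inequality for the convolution term together with $\|F(u_n)-F(u)\|_{4/(4-\alpha)}\to0$ and $L^s$-convergence of $f(u_n)$, all resting on the fact that $e^{\beta w^2}-1\in L^1(\R^2)$ for each fixed $w\in H^1(\R^2)$ and a generalized dominated convergence argument. The only difference is cosmetic: you spell out the G\^{a}teaux differentiability via dominated convergence of the difference quotients, which the paper dismisses as a straightforward calculation.
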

\Proof
It is standard to check that for fixed $\al>0$, one has $I_\al\in C(H_r^1(\R^2),\R)$. By straightforward calculations, the G\^{a}teaux derivative of $I_\al$ at $u\in H_r^1(\R^2)$ is given by
$$
 I'_{\alpha}(u)v= \int_{\R^2}\bigg(\nabla u\nabla v+uv\bigg){\rm d}x+\frac{1}{\alpha}\int_{\R^2}F(u){\rm d}x\cdot\int_{\R^2}f(u)v{\rm d}x
 -\frac{1}{\alpha}\int_{\R^2}\bigg[\frac{1}{|x|^{\alpha}}*F(u)\bigg]f(u)v{\rm d}x,
$$
for any $v\in H_r^1(\R^2)$.
It remains to prove that $ I'_{\alpha}(u_n)\rightarrow I'_{\alpha}(u)$ if $u_n\rightarrow u$ in $H_r^1(\R^2)$. Now on the one hand,
observe that uniformly in $v\in H_r^1(\R^2)$, with $\|v\|\leq1$,
\begin{equation}\label{eqn:function1}
\aligned
&\bigg|\int_{\R^2}F(u_n){\rm d}x\cdot\int_{\R^2}f(u_n)v{\rm d}x-\int_{\R^2}F(u){\rm d}x\cdot\int_{\R^2}f(u)v{\rm d}x\bigg|\\
&\leq \int_{\R^2}F(u_n){\rm d}x\int_{\R^2}|f(u_n)v-f(u)v|{\rm d}x+\int_{\R^2}\big|F(u_n)-F(u)\big|{\rm d}x\cdot\int_{\R^2}f(u)v{\rm d}x\\
&\leq C\int_{\R^2}|f(u_n)v-f(u)v|{\rm d}x+o_n(1)\\
&\leq C\bigg(\int_{\R^2}|f(u_n)-f(u)|^2{\rm d}x\bigg)^{1/2}\cdot\|v\|+o_n(1)\|v\|.
\endaligned
\end{equation}
On the other hand, by ($f_1$) and ($f_2$), one has for any $x\in\R^2$ and any $\e>0$, there exists $C_\e>0$ such that
$$
|f(u_n)|^2\leq C_\e\bigg(|u_n|^2+e^{(8\pi+\e)|u_n|^2}\bigg)=:g(u_n).
$$
It is easy to check that $g(u_n)\rightarrow g(u)$ in $L^1(\R^2)$ due to the fact that $u_n\rightarrow u$ in $H_r^1(\R^2)$.
Hence, by the Lebesgue dominated convergence theorem, we get $\|f(u_n)\|_2^2\rightarrow\|f(u)\|_2^2$ as $n\rightarrow\infty$. Therefore, from (\ref{eqn:function1}) one has that, uniformly
in $v\in H_r^1(\R^2)$, with $\|v\|\leq1$,
\begin{equation}\label{eqn:function2}
\aligned
\bigg|\int_{\R^2}F(u_n){\rm d}x\cdot\int_{\R^2}f(u_n)v{\rm d}x-\int_{\R^2}F(u){\rm d}x\cdot\int_{\R^2}f(u)v{\rm d}x\bigg|=o_n(1).
\endaligned
\end{equation}
Moreover, by Lemma \ref{Lem:jianjin}-$i)$ and the Hardy-Littleword-Sobolev inequality,
we have for any $v\in H_r^1(\R^2)$, with $\|v\|\leq1$,
\begin{equation}\label{eqn:function3}
\aligned
&\bigg|\int_{\R^2}\bigg[\frac{1}{|x|^{\alpha}}*F(u_n)\bigg]f(u_n)v{\rm d}x
-\int_{\R^2}\bigg[\frac{1}{|x|^{\alpha}}*F(u)\bigg]f(u)v{\rm d}x\bigg|\\
&\leq \int_{\R^2}\bigg[\frac{1}{|x|^{\alpha}}*F(u_n)\bigg]|f(u_n)-f(u)|v{\rm d}x
+\int_{\R^2}\bigg[\frac{1}{|x|^{\alpha}}*|F(u_n)-F(u)|\bigg]f(u)v{\rm d}x\\
&\leq C\int_{\R^2}|f(u_n)-f(u)|v{\rm d}x+\|F(u_n)-F(u)\|_{\frac{4}{4-\al}}\|f(u)v\|_{\frac{4}{4-\al}}\\
&\leq C o_n(1)\|v\|+C\|F(u_n)-F(u)\|_{\frac{4}{4-\al}}\|v\|.
\endaligned
\end{equation}
Observe that by the mean value theorem, H\"{o}lder's inequality, ($f_1$) and ($f_2$), there exists $\theta\in(0,1)$ such that
$$
\aligned
&\|F(u_n)-F(u)\|^{\frac{4}{4-\al}}_{\frac{4}{4-\al}}\\
&\leq\int_{\R^2}|f(u_n+\theta(u_n-u))(u_n-u)|^{\frac{4}{4-\al}}{\rm d}x\\
&\leq C\int_{\R^2} |(|u_n|+|u|)(u_n-u)|^{\frac{4}{4-\al}}{\rm d}x
+ C\int_{\R^2}(|u_n|+|u|)^{\frac{4p}{4-\al}}|u_n-u|^{\frac{4}{4-\al}}e^{{\frac{16\pi}{4-\al}}(|u_n|+|u_0|)^2}{\rm d}x\\
&\leq o_n(1)+ C\bigg(\int_{\R^2}(|u_n|+|u|)^{\frac{4pt}{4-\al}}|(u_n-u)|^{\frac{4t}{4-\al}}{\rm d}x\bigg)^{\frac{1}{t}}
\bigg(\int_{\R^2}e^{{\frac{16\pi t'}{4-\al}}\pi(|u_n|+|u_0|)^2}{\rm d}x\bigg)^{\frac{1}{t'}}\\
&\leq o_n(1)+ Co_n(1)\cdot\bigg(\int_{\R^2}e^{{\frac{128\pi t'}{4-\al}}\pi(|u_n-u_0|^2+|u_0|^2)}{\rm d}x\bigg)^{\frac{1}{t'}}\\
&\leq C o_n(1),
\endaligned
$$
where $\frac{1}{t}+\frac{1}{t'}=1$. Then, from (\ref{eqn:function3}) we have
$$
\bigg|\int_{\R^2}\bigg[\frac{1}{|x|^{\alpha}}*F(u_n)\bigg]f(u_n)v{\rm d}x
-\int_{\R^2}\bigg[\frac{1}{|x|^{\alpha}}*F(u)\bigg]f(u)v{\rm d}x\bigg|=o_n(1)\|v\|,
$$
which together with (\ref{eqn:function2}) completes the proof.

\subsection{Critical points of the modified energy functional}

 \begin{lemma}\label{Lem:MP1}
Assume that $(f_1)$--$(f_4)$ hold, then we have:
\begin{itemize}
 \item[$i)$] there exist constants $\rho,\eta>0$ such that $I_\al|_{S_\rho}\geq
\eta>0$ for all
$$
u\in S_\rho=\{u\in H_r^1(\R^2):\,\,\|u\|=\rho\};
$$
\item[$ii)$] there exists $e\in H^1_r(\R^2)$ with $\|e\|>\rho$ such that
 $I_\al(e)<0$.
\end{itemize}
 \end{lemma}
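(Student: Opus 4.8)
We want to verify the two conditions in the definition of the mountain pass geometry. For part $i)$, the plan is as follows. Since $\frac{1}{2\al}\big[\int_{\R^2}F(u)\big]^2\ge 0$, we have
$$I_\al(u)\ \ge\ \frac12\|u\|^2-\frac{1}{2\al}\int_{\R^2}\Big[\frac{1}{|x|^\al}*F(u)\Big]F(u)\,{\rm d}x .$$
I would estimate the last integral by the Hardy--Littlewood--Sobolev inequality (Lemma~\ref{Lem:HLS}) with $N=2$ and $s=r=\tfrac{4}{4-\al}$, so that it is at most $2\sqrt\pi\,\|F(u)\|_{4/(4-\al)}^2$. The crucial point is then the estimate $\|F(u)\|_{4/(4-\al)}^2\le C\|u\|^4$ valid for $\|u\|$ small. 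To obtain it I would first deduce from $(f_1)$ (namely $f(t)=o(t)$ at $0$ and the exponential growth at $+\infty$, cf.~\eqref{eqn:f1}) that for every $\e>0$ there is $C_\e>0$ with
$$0\le F(t)\le t^2\Big(\e+C_\e\big(e^{(4\pi+\e)t^2}-1\big)\Big),\qquad t\ge 0;$$
raising to the power $\tfrac{4}{4-\al}$, using $(a+b)^p\le 2^{p-1}(a^p+b^p)$ and $(e^a-1)^p\le e^{pa}-1$ for $p\ge1$, and then applying H\"older's inequality, the Sobolev embeddings $H^1(\R^2)\hookrightarrow L^q(\R^2)$ for $q\ge2$, and the Trudinger--Moser inequality \eqref{Lem:MTI} (after writing $u=\rho v$ with $\|v\|=1$, which keeps the relevant exponent below $4\pi$ once $\rho^2$ is small), one gets $\int_{\R^2}F(u)^{4/(4-\al)}\,{\rm d}x\le C\|u\|^{8/(4-\al)}$ and hence the claimed bound. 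This yields $I_\al(u)\ge\frac12\rho^2-\frac{C\sqrt\pi}{\al}\rho^4$ on $S_\rho$, so it suffices to fix $\rho=\rho_\al>0$ small enough (in particular $\rho^2\le\frac{\al}{4C\sqrt\pi}$ and $\rho$ below the Trudinger--Moser threshold) to conclude $I_\al|_{S_\rho}\ge\frac14\rho^2=:\eta>0$.

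For part $ii)$, I would fix a radial function $u_0\in C_c^\infty(\R^2)$ with $u_0\ge0$, $u_0\not\equiv0$ and $\mathrm{supp}\,u_0\subset B_{1/4}(0)$, and set $A:=\{u_0\ge\delta_0\}$ for a small $\delta_0>0$, so that $|A|>0$ and $A\subset B_{1/4}(0)$. Writing the last two terms of $I_\al$ jointly as $-\frac12\int_{\R^2}\big(G_\al*F(tu_0)\big)F(tu_0)\,{\rm d}x$ and noting that $F(tu_0)$ is supported in $B_{1/4}(0)$, where $|x-y|<\tfrac12$ and thus $G_\al(x-y)=\frac{|x-y|^{-\al}-1}{\al}>0$, I would restrict the double integral to $A\times A$; there $|x-y|\le\tfrac12$ gives $G_\al(x-y)\ge\frac{2^\al-1}{\al}\ge\ln 2$ (using $e^s\ge1+s$), and since $F$ is nondecreasing on $[0,\infty)$,
$$I_\al(tu_0)\ \le\ \frac{t^2}{2}\|u_0\|^2-\frac{\ln2}{2}\,|A|^2\,F(t\delta_0)^2 .$$
By $(f_4)$ there is $s_0>0$ with $F(s)\ge\frac{\beta}{2s}e^{4\pi s^2}$ for $s\ge s_0$, so for $t\ge s_0/\delta_0$ the subtracted term grows like $e^{8\pi\delta_0^2 t^2}/t^2$ and $I_\al(tu_0)\to-\infty$ as $t\to+\infty$. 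Choosing $\bar t$ large with $I_\al(\bar t u_0)<0$ and $\bar t\|u_0\|>\rho$ and setting $e:=\bar t u_0$ finishes part $ii)$; note that this $e$ and the bound can be taken independent of $\al\in(0,1)$.

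The main obstacle is the estimate in part $i)$. Because of the factor $1/\al$ in front of the (sign-changing, asymptotically logarithmic) interaction term, a bound of the form $\|F(u)\|_{4/(4-\al)}^2=O(\|u\|^2)$ would be useless --- it would not dominate the quadratic term $\frac12\|u\|^2$ for small $\al$ --- so one genuinely needs the quartic bound $O(\|u\|^4)$. This forces the growth estimate for $F$ to be taken with the $t^2$ prefactor in front of the exponential (so that near the origin that contribution is $O(t^4)$, not $O(t^2)$) and the Trudinger--Moser inequality to be applied with a small amount of room in the exponent, which in turn makes $\rho=\rho_\al$ shrink as $\al\to0^+$. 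Part $ii)$ is comparatively routine, the only care needed being to choose $\mathrm{supp}\,u_0$ small enough that $G_\al$ is positive --- and bounded below independently of $\al$ --- on the support.
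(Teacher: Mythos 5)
Your part $ii)$ is correct and essentially a variant of the paper's argument: the paper derives the superquadratic growth of $t\mapsto\big(\int_{\R^2}F(te_0)\,{\rm d}x\big)^2$ from $(f_2)$ via the differential inequality $\Psi'/\Psi\ge \tfrac{2}{(1-\tau)t}$ rather than from $(f_4)$, but both routes work, and your lower bound $G_\al(x-y)\ge\ln 2$ on a support of diameter less than $1$ is exactly the one used there. The genuine problem is in part $i)$. By discarding the positive term $\frac{1}{2\al}\big[\int_{\R^2}F(u)\,{\rm d}x\big]^2$ and estimating $\frac{1}{2\al}\int_{\R^2}\big[|x|^{-\al}\ast F(u)\big]F(u)\,{\rm d}x$ on its own, you retain the prefactor $1/\al$, and, as you yourself note, this forces $\rho=\rho_\al$ and $\eta=\eta_\al=\tfrac14\rho_\al^2$ to degenerate as $\al\to0^+$. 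That is not merely an aesthetic loss: the entire purpose of this lemma in the paper is Remark \ref{rem:5.3}, namely that $a<c_\al<b$ with $a,b>0$ \emph{independent} of $\al$, where the lower bound $a$ is precisely the $\al$-uniform $\eta$ produced by the paper's proof ("$I_\al(u)\ge\eta$ for any $\al\in(0,1)$"). This uniform lower bound on $c_\al$ is then used in Step 2 of the proof of Theorem \ref{Thm:Existence} to exclude $u_0\equiv0$ in the limit $\al\to0^+$; with $\eta_\al\to0$ the approximation scheme collapses.

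The fix is the paper's: keep the two convolution terms together as $-\frac12\int_{\R^2}\int_{\R^2}\frac{|x-y|^{-\al}-1}{\al}F(u(x))F(u(y))\,{\rm d}x\,{\rm d}y$ and exploit the sign and size of the kernel directly. For $|x-y|\ge1$ the kernel is nonpositive, so that region can simply be dropped when bounding $I_\al$ from below; for $|x-y|\le1$ one has the $\al$-uniform bound $\frac{|x-y|^{-\al}-1}{\al}\le\frac{1}{|x-y|}$, because the difference quotient of the convex map $\al\mapsto|x-y|^{-\al}$ is nondecreasing in $\al$ and hence bounded by its value at $\al=1$. The factor $1/\al$ then disappears entirely, the Hardy--Littlewood--Sobolev inequality at exponent $4/3$ combined with your (correct) small-norm estimate for $\|F(u)\|$ gives $I_\al(u)\ge\frac12\|u\|^2-C\big(\|u\|_{8/3}^{4}+\|u\|_{\frac{4\varsigma}{3}(p+1)}^{2(p+1)}\big)$ with $C$ independent of $\al\in(0,1)$, and $\rho,\eta$ can be chosen uniformly in $\al$.
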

\begin{proof}
We first assume that $u\in H_r^{1}(\R^2)$ and $\|u\|\leq \theta$ for some $\theta\in(0,1)$. Obviously, $\int_{\R^2}|\nabla u|^2<1$.
Moreover, by ($f_1$)-($f_2$) and (\ref{eqn:f1}), there is $C_\theta>0$ such that
$$
|F(u)|^{\frac{4}{3}}\leq C\bigg(|u|^{\frac{8}{3}}+|u|^{\frac{4}{3}(p+1)}e^{\frac{8\pi}{3}|u|^{2}}\bigg).
$$
So by ($f_1$), the Hardy-Littlewood-Sobolev inequality yields
\begin{equation}\label{eqn:guestimate}
\aligned
&\int_{\R^2}\int_{\R^2}\frac{1}{|x-y|} F(u(y))F(u(x)){\rm d}x{\rm d}y\\
&\leq C\bigg(\int_{\R^2}|F(u)|^{\frac{4}{3}}{\rm d}x\bigg)^{\frac{3}{2}}\\
&\leq C\bigg(\int_{\R^2}|u|^{\frac{8}{3}}+|u|^{\frac{4}{3}(p+1)}e^{\frac{8\pi}{3}|u|^{2}}{\rm d}x\bigg)^\frac{3}{2}\\
&\leq C\bigg(\|u\|_{\frac{8}{3}}^{4}+\|u\|_{\frac{4\varsigma}{3}(p+1)}^{2(p+1)}\bigg),\quad \varsigma>1.
\endaligned
\end{equation}
From (\ref{eqn:guestimate}) and Lemma \ref{Lem:G} we deduce that
\begin{equation}\label{eqn:function}
\aligned
I_{\alpha}(u)&=\frac{1}{2}\|u\|^{2}-\frac{1}{2}\int_{\R^2}\int_{\R^2}
\frac{|x-y|^{-\alpha}-1}{\alpha}F(u(y))F(u(x)){\rm d}x{\rm d}y\\
&\geq\frac{1}{2}\|u\|^{2}-\frac{1}{2}\int\int_{|x-y|\leq1}
\frac{|x-y|^{-\alpha}-1}{\alpha}F(u(y))F(u(x)){\rm d}x{\rm d}y\\
&\geq\frac{1}{2}\|u\|^{2}-\frac{1}{2}\int\int_{|x-y|\leq1}
\frac{F(u(y))F(u(x))}{|x-y|}{\rm d}x{\rm d}y\\
&\geq\frac{1}{2}\|u\|^{2}-C\bigg(\|u\|_{\frac{8}{3}}^{4}+\|u\|_{\frac{4\varsigma}{3}(p+1)}^{2(p+1)}\bigg).
\endaligned
\end{equation}
So, let $\|u\|=\rho>0$ be small
enough, it is easy to check that there exists $\eta>0$ such that
$I_{\alpha}(u)\geq \eta$ for any $\alpha \in (0,1)$: this completes the proof of $i)$.

\medskip

\noindent For the proof of $ii)$, let us take $e_0\in H^1_r(\R^2)$ satisfies $e_0(x)\equiv1$ for $x\in B_{\frac{1}{8}}(0)$, $e_0(x)\equiv0$ for $x\in \R^2\setminus{B_{\frac{1}{4}}(0)}$ and $|\nabla e_0(x)|\leq C$. Set
\begin{equation}\label{eqn:AR-0}
\Psi(t):=\frac{1}{2}\bigg(\int_{\R^2}F\big(te_0\big){\rm d}x\bigg)^2,
\end{equation}
then by ($f_2$) we have $F(t)\leq(1-\tau)f(t)t$ for $t\geq0, \tau\in(0,1)$, and then
$$
\frac{\Psi'(t)}{\Psi(t)}\geq\frac{2}{(1-\tau)t}\quad\text{ for\,all }\,t>0.
$$
Integrating this over $[1,s]$, we find
\begin{equation}\label{eqn:AR}
\Psi(s)=\frac{1}{2}\bigg(\int_{\R^2}F\big(se_0\big){\rm d}x\bigg)^2\geq \Psi(1)s^\frac{2}{1-\tau}.
\end{equation}
Note that
$$
\frac{s^{-\alpha}-1}{\alpha}\geq \ln\frac{1}{s},\quad \text{for}\, s\in(0,1].
$$
From (\ref{eqn:AR-0}) and (\ref{eqn:AR}) we have
\begin{equation}\label{eqn:MPG}
\aligned
I_{\alpha}(te_0)
&=\frac{t^2}{2}\|e_0\|^2
-\frac{1}{2}\int_{\R^2}\int_{\R^2}
\frac{|x-y|^{-\alpha}-1}{\alpha}F\big(te_0(y)\big)F\big(te_0(x)\big){\rm d}x{\rm d}y\\
&=\frac{t^2}{2}\|e_0\|^2
-\frac{1}{2}\int\int_{|x-y|\leq\frac{1}{2}}\frac{|x-y|^{-\alpha}-1}{\alpha}F\big(te_0(y)\big)F\big(te_0(x)\big){\rm d}x{\rm d}y\\
&\leq\frac{t^2}{2}\|e_0\|^2
-\frac{1}{2}\int\int_{|x-y|\leq\frac{1}{2}}\ln\frac{1}{|x-y|}F\big(te_0(y)\big)F\big(te_0(x)\big){\rm d}x{\rm d}y\\
&\leq\frac{t^2}{2}\|e_0\|^2
-\frac{\ln2}{2}\bigg(\int_{\R^2}F(te_0){\rm d}x\bigg)^2\\
&\leq\frac{t^2}{2}\|e_0\|^2
-\frac{\ln2}{2}\bigg(\int_{\R^2}F(e_0){\rm d}x\bigg)^2t^{\frac{2}{1-\tau}},\\
\endaligned
\end{equation}
which implies that there exists $t_0>0$ large enough such that $I_{\alpha}(t_0e_0)<0$.
\end{proof}

\noindent So $I_\al$ has a mountain pass geometry, with mountain pass value
$$
c_{\alpha}=\inf\limits_{\gamma\in\Gamma}\max\limits_{t\in[0,1]}I_\al(\gamma(t)),
$$
where
$$
\Gamma:=\{\gamma\in C([0,1], H^1_r(\mathbb{R}^2)):\,\gamma(0)=0,\gamma(1)=e\}.
$$
The existence of a Cerami sequence for $I_\al$, namely $\{u_n\}\subset H^1_r(\R^2)$ such that
$$
I_{\alpha}(u_n)\rightarrow c_{\alpha},\quad\quad (1+\|u_n\|)I'_\alpha(u_n)\rightarrow0,\quad \text{ as } n\rightarrow+\infty,
$$
is given by the variant of the Mountain Pass Theorem in \cite{Ekeland}.
\begin{remark}\label{rem:5.3}
Observe from Lemma \ref{Lem:MP1} that there exist two constants $a,b>0$ independent of $\alpha$ such that $a<c_\al<b$.
\end{remark}

\subsection{Level estimates for the modified energy}
\noindent Let us now define Moser's type functions $w_n(x)$ supported in $B_{\rho}(0)$ as follows:
$$
\aligned
w_n(x)=\frac{1}{\sqrt{2\pi}}\left\{
  \begin{array}{ll}
    \sqrt{\ln n},&  0\leq|x|\leq\rho/n,\\
    \frac{\ln(\rho/|x|)}{ \sqrt{\ln n}}, & \rho/n\leq|x|\leq\rho,\\
    0,&|x|\geq\rho,
  \end{array}
\right.
\endaligned
$$
where $\rho<\frac{1}{4}$ is given in $(f_4)$.
We have
\begin{equation}\label{eqn:moserf1}
\aligned
\|w_n\|^2&= \int_{B_\rho(0)}\bigg(|\nabla w_n|^2+w_n^2\bigg){\rm d}x\\
&=1+\rho^2(\frac{1}{4\ln n}-\frac{1}{4n^2\ln n}-\frac{1}{2n^2})\\
&=:1+\rho^2\delta_n.
\endaligned
\end{equation}
From
$$
\frac{s^{-\alpha}-1}{\alpha}\geq \ln\frac{1}{s},\quad \text{ for }\, s,\al\in(0,1],
$$
the following holds
\begin{equation}\label{eqn:moserf3}
\aligned
&\frac{1}{2}\int_{\R^2}\bigg[G_{\alpha}(x)\ast F(w_n)\bigg] F(w_n){\rm d}x\\
&=\frac{1}{2}\int_{\R^2}\int_{\R^2}\frac{|x-y|^{-\al}-1}{\al} F(w_n(x))F(w_n(y)){\rm d}x{\rm d}y\\
&=\frac{1}{2}\int_{B_\rho(0)}\int_{B_\rho(0)}\frac{|x-y|^{-\al}-1}{\al} F(w_n(x))F(w_n(y)){\rm d}x{\rm d}y\\
&\geq\frac{1}{2}\int_{B_\rho(0)}\int_{B_\rho(0)}\ln\frac{1}{|x-y|} F(w_n(x))F(w_n(y)){\rm d}x{\rm d}y\geq0.
\endaligned
\end{equation}

\begin{lemma}\label{Lem:upperbound}
The mountain pass level $c_\al$ satisfies $\displaystyle\sup_{\al\in(0,1)}c_{\al}<\frac{1}{2}$.
\end{lemma}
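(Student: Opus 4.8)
The plan is to test the mountain pass level against the path $t\mapsto tw_n$ for a suitably large $n$, exploiting the Moser functions $w_n$ already introduced and the key inequality $\frac{s^{-\alpha}-1}{\alpha}\ge\ln\frac1s$ on $(0,1]$ which makes the nonlocal term a genuine obstruction. Since $c_\alpha=\inf_{\gamma}\max_t I_\alpha(\gamma(t))$, it suffices to produce for each $\alpha\in(0,1)$ an admissible path along which the maximum of $I_\alpha$ is $<\frac12$, uniformly in $\alpha$. Because $I_\alpha(tw_n)\to-\infty$ as $t\to+\infty$ (this follows from the computation \eqref{eqn:MPG}-type estimate: the nonlocal term grows like $t^{2/(1-\tau)}$ while the quadratic part grows like $t^2$, and $2/(1-\tau)>2$), the path $t\mapsto t\,Tw_n$ for $T$ large is admissible once $I_\alpha(Tw_n)<0$. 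So the quantity to control is $\max_{t\ge0}I_\alpha(tw_n)$.

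The main step is the following chain of estimates, for $\|u\|=tw_n$. Using \eqref{eqn:moserf1} and \eqref{eqn:moserf3},
$$
I_\alpha(tw_n)\le \frac{t^2}{2}\|w_n\|^2-\frac12\int_{B_\rho(0)}\int_{B_\rho(0)}\ln\frac{1}{|x-y|}F(tw_n(x))F(tw_n(y))\,{\rm d}x{\rm d}y.
$$
On the inner ball $\{|x|\le\rho/n\}$ one has $w_n\equiv\frac{1}{\sqrt{2\pi}}\sqrt{\ln n}$, so for $x,y$ both in that ball, $|x-y|\le 2\rho/n<1$ and $\ln\frac{1}{|x-y|}\ge\ln\frac{n}{2\rho}$. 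Hence the double integral is bounded below by $\ln\frac{n}{2\rho}\cdot\big(\pi(\rho/n)^2\big)^2\cdot F\big(\tfrac{t}{\sqrt{2\pi}}\sqrt{\ln n}\big)^2$, which after using the lower growth assumption $(f_4)$, namely $tF(t)\gtrsim \beta e^{4\pi t^2}$ for large $t$, translates into an exponential lower bound in $t^2\ln n$. Plugging this in and optimizing in $t$: the function $t\mapsto \frac{t^2}{2}(1+\rho^2\delta_n)- (\text{exp. term})$ attains its maximum at a point $t_n$ where, roughly, $t_n^2\approx \frac{1}{4\pi}+o(1)$ as $n\to\infty$ (the exponent $e^{4\pi t^2 \ln n/(2\pi)} = n^{2t^2}$ must be balanced against a power of $n$ coming from the $(\rho/n)^4$ factor and the $\ln\frac{n}{2\rho}$ factor). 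A careful accounting of the powers of $n$ and of $\ln n$ — this is where assumption $(f_4)$ with the precise constant $\beta>\frac{1}{\rho^2\sqrt{\ln2}\,\pi^{3/2}}$ enters — shows that for $n$ large enough, $\max_{t\ge0}I_\alpha(tw_n)<\frac12$, with the bound independent of $\alpha$ because $\delta_n\to0$ and because we only used $\frac{s^{-\alpha}-1}{\alpha}\ge\ln\frac1s$, which holds for every $\alpha\in(0,1]$.

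The hard part is the bookkeeping in the optimization: one must show that the optimal $t_n^2$ approaches a value strictly less than what would push the quadratic term $\frac{t_n^2}{2}(1+\rho^2\delta_n)$ above $\frac12$, and simultaneously that the subtracted exponential term is large enough to absorb the remainder, \emph{uniformly in $\alpha$}. Concretely, one fixes $n$ first (depending only on $\beta,\rho,\tau$ through $(f_4)$), verifies $\frac{t_n^2}{2}\|w_n\|^2<\frac12$ using that $\|w_n\|^2=1+\rho^2\delta_n$ with $\delta_n\sim\frac{1}{4\ln n}$, and checks that the exponential lower bound coming from $(f_4)$ beats the error $\frac{t_n^2\rho^2\delta_n}{2}$; the threshold $\frac{1}{\rho^2\sqrt{\ln2}\,\pi^{3/2}}$ in $(f_4)$ is precisely calibrated so that this works. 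Since none of these estimates sees $\alpha$ except through the one-sided inequality $\frac{s^{-\alpha}-1}{\alpha}\ge\ln\frac1s$, we conclude $\sup_{\alpha\in(0,1)}c_\alpha<\frac12$.
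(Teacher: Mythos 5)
Your overall strategy---testing $c_\alpha$ along $t\mapsto tw_n$, using $\frac{s^{-\alpha}-1}{\alpha}\ge\ln\frac1s$ to obtain an $\alpha$-independent lower bound on the nonlocal term, localizing to the inner ball where $w_n$ is constant, and invoking $(f_4)$---is exactly the paper's. But the proof of this lemma lives entirely in the optimization in $t$ that you defer as ``bookkeeping,'' and the one quantitative claim you make about it is wrong: the maximizer does not satisfy $t_n^2\approx\frac{1}{4\pi}$. With the normalization $w_n=\frac{1}{\sqrt{2\pi}}\sqrt{\ln n}$ on $B_{\rho/n}$ one has $e^{4\pi(tw_n)^2}=n^{2t^2}$, hence after squaring $\int F(tw_n)$ and inserting the volume factor $(\pi\rho^2/n^2)^2$ the subtracted term is of order $n^{4(t^2-1)}/(t^2\ln n)$; balancing this against $\frac{t^2}{2}\|w_n\|^2$ forces $t_n^2=1+O(1/\ln n)$, which is \eqref{eqn:case5} in the paper. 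Consequently the quadratic term at the maximizer equals $\frac12(1+\rho^2\delta_n)t_n^2=\frac12+O(1/\ln n)$, i.e.\ it genuinely threatens to exceed $\frac12$, while the subtracted exponential term at $t=t_n$ is itself only of size $O(1/\ln n)$. The entire content of the lemma is the competition between these two $O(1/\ln n)$ quantities, and the sharp threshold $\beta>\frac{1}{\rho^2\sqrt{\ln2}\cdot\pi^{3/2}}$ in $(f_4)$ together with $\rho^2<\ln 4$ is calibrated precisely so that the second beats the first (see \eqref{eqn:case5}--\eqref{eqn:case7}). If the maximizer really sat near $t^2=\frac{1}{4\pi}$, the quadratic term alone would be far below $\frac12$ and $(f_4)$ would be superfluous; your sketch therefore misidentifies where the difficulty is and does not establish the estimate.

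Two further points. First, the maximum over all $t\ge0$ cannot be controlled by a single stationarity computation: the lower bound from $(f_4)$ applies only once $tw_n\ge t_\e$ on $B_{\rho/n}$, and the large-$t$ regime needs a separate quantitative argument (the paper's Case~2, where one checks $\frac{1}{t^4\ln n}\,n^{4(t^2-1)}\ge1$ for $t\ge\sqrt2$ and $n$ large, so that $I_\alpha(tw_n)\le0$ there); knowing only that $I_\alpha(tw_n)\to-\infty$ gives admissibility of the path but says nothing about the maximum on the intermediate range. Second, your lower bound $\ln\frac{1}{|x-y|}\ge\ln\frac{n}{2\rho}$ on $B_{\rho/n}\times B_{\rho/n}$ is legitimate and in fact slightly stronger than the paper's $\ln\frac{1}{|x-y|}\ge\ln2$ on $B_\rho\times B_\rho$, but then the numerical constant in $(f_4)$, which contains $\sqrt{\ln2}$, is no longer the one matched to your estimate, so the final comparison would have to be recomputed rather than quoted.
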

\begin{proof}
Recalling ($f_4$), for
\begin{equation}\label{eqn:case0-}
\e\in\bigg(0,\beta-\frac{1}{\rho^2\sqrt{\ln2}\cdot\pi^{3/2}}\bigg),
\end{equation}
there exists $t_\e>0$ such that
\begin{equation}\label{eqn:case0}
\aligned
tF(t)\geq (\beta-\e)e^{4\pi t^2},\quad \text{for}\,\, t\geq t_\e.
\endaligned
\end{equation}
We divide the proof into three cases:

\noindent \emph{Case 1.} Let $t\in\left[0,\sqrt{\frac12}\right]$, then by (\ref{eqn:moserf1})-(\ref{eqn:moserf3}), we have for large $n$,
\begin{equation}\label{eqn:case1}
\aligned
I_{\al}(tw_n)&=\frac{t^2}{2}\|w_n\|^2-\frac{1}{2}\int_{\R^2}\int_{\R^2}
\frac{|x-y|^{-\alpha}-1}{\alpha}F(tw_n(y))F(tw_n(x)){\rm d}y{\rm d}x\\
&<\frac{t^2}{2}\|w_n\|^2< \frac{1}{2}.
\endaligned
\end{equation}

\medskip

\noindent \emph{Case 2.} Let $t\in\bigg(\sqrt{2},+\infty\bigg)$. According to the definition of $w_n$, we have for
large $n$, $tw_n(x)\geq t_\e$ for $x\in B_{\rho/n}$.
From (\ref{eqn:moserf1})-(\ref{eqn:case0}), we deduce that for large $n$,
\begin{equation}\label{eqn:case2}
\aligned
I_{\al}(tw_n) \leq & \frac{t^2}{2}\|w_n\|^2-\frac{1}{2}\int_{\R^2}\int_{\R^2}
\ln\frac{1}{|x-y|}F(tw_n(y))F(tw_n(x)){\rm d}y{\rm d}x\\
\leq&\frac{t^2}{2}\|w_n\|^2-\frac{\ln2}{2} \bigg(\int_{\R^2}F(tw_n){\rm d}x\bigg)^2\\
<&\frac{1+\delta_n\rho^2}{2}t^2-\frac{(\ln 2)\pi^3(\beta-\e)^2\rho^4}{n^4t^{2}\ln n}e^{4 t^2\ln n}\\
<&t^2\bigg(1-\frac{1}{t^{4}\ln n}n^{4 (t^2-1)}\bigg).
\endaligned
\end{equation}
Let
$$
g(n,t):=\frac{1}{t^{4}\ln n}n^{4 (t^2-1)}, t\geq\sqrt{2}, n\geq2.
$$
then there exists $n_0>0$ such that
$g(n,t)\geq 1$ for $n\geq n_0$ and $t\geq \sqrt{2}$.
%\begin{equation}\label{eqn:case3}
%\aligned
%g(n,4)\geq 1
%\endaligned
%\end{equation}
%for large $n$. It then follows from (\ref{eqn:case2}) and (\ref{eqn:case3}) that
%\begin{equation}\label{eqn:case3-1}
%\aligned
%I_{\al,\lambda}(tw_n)&<t^2+\frac{2}{5}\rho^{\frac52}t^{\frac52}-\frac{2\pi^2(\kappa-\e)\rho^2}{4t^2\ln 2} 2^{\frac{\theta_0}{2\pi} t^2}\\
%&\rightarrow-\infty,\,\,\text{as}\,\,t\rightarrow\infty,
%\endaligned
%\end{equation}
Hence, when $t\in\bigg(\sqrt{2},+\infty\bigg)$ and $n$ is large enough,
\begin{equation}\label{eqn:case3-2}
\aligned
I_{\al}(tw_n)<\frac{1}{2}.
\endaligned
\end{equation}

%$t\in \bigg[\sqrt{\frac{4\pi}{\theta_0}(1+\e)},+\infty\bigg)$. Define

\noindent \emph{Case 3.} Let $t\in\left[\sqrt{\frac{1}{2}},\sqrt{2}\right]$. According to the definition of $w_n$, we have for
sufficiently large $n$, $tw_n(x)\geq t_\e$ for $x\in B_{\rho/n}$. From (\ref{eqn:moserf1})--(\ref{eqn:case0}) we have
\begin{equation}\label{eqn:case4}
\aligned
I_{\al}(tw_n) \leq&\frac{t^2}{2}\|w_n\|^2-\frac{1}{2}\int_{\R^2}\int_{\R^2}
\ln\frac{1}{|x-y|}F(tw_n(y))F(tw_n(x)){\rm d}y{\rm d}x\\
<&\frac{1+\delta_n\rho^2}{2}t^2-\frac{(\ln 2)\pi^3(\beta-\e)^2\rho^4}{n^4t^{2}\ln n}e^{4 t^2\ln n}\\
\leq&\frac{1+\delta_n\rho^2}{2}t^2-\frac{1}{2\ln n}n^{4 (t^2-1)}:=\psi_n(t).
\endaligned
\end{equation}
%where for the last inequality we use the fact that $\la<\frac{1}{\sqrt{t^*}}$.
%Define
%$\delta_n:=\frac{1}{2}\|w_n\|^2+\frac{2}{5}
%\|w_n\|_2^{5/2}$,
Then, there exists $t_n>0$ such that $\displaystyle\psi_n(t_n)=\max_{t>0}\psi_n(t)$ and
\begin{equation}\label{eqn:case5}
t_n^2=\bigg[1+\frac{\ln(1+\rho^2\delta_n)
-\ln4}{4\ln n}\bigg].
\end{equation}
Clearly, $t_n\in \big[\sqrt{\frac{1}{2}},\sqrt{2}\big]$
for large $n$. Then, by (\ref{eqn:case4}) and (\ref{eqn:case5}) we have
\begin{equation}\label{eqn:case6}
\aligned
\psi_n(t)\leq& \psi_n(t_n)\\
=& t_n^2\frac{1+\delta_n\rho^2}{2}-\frac{1}{8\ln n}(1+\delta_n\rho^2)\\
=&(1+\delta_n\rho^2)\bigg[\frac12+\frac{\ln(1+\rho^2\delta_n)
-\ln4}{8\ln n}-\frac{1}{8\ln n}\bigg]\\
\leq&\bigg(1+\frac{\rho^2}{4\ln n}\bigg)\bigg[\frac12+\frac{\ln(1+\rho^2\delta_n)
}{8\ln n}-\frac{1+\ln4}{8\ln n}\bigg],\\
\endaligned
\end{equation}
 which, together with the definition of $\delta_n$ and the fact $\rho^2<\ln 4$, implies for sufficiently large $n$
\begin{equation}\label{eqn:case7}
\psi_n(t)<\frac{1}{2}.
\end{equation}
Combining (\ref{eqn:case1}), (\ref{eqn:case3-2}) and (\ref{eqn:case7}),
we have $I_{\al}(tw_n)<\frac{1}{2}$. Indeed, for fixed $n$ large enough, there exists
$t_0>0$ such that $I_{\al}(t_0w_n)<0$. Define
$\gamma(t)=tt_0w_n$ for $t\in[0,1]$, then $\gamma\in \Gamma$ and the conclusion follows.
\end{proof}
\subsection{Compactness} In this Section we analyze the behavior of Cerami's sequences. Let us begin with the following
\begin{lemma}\label{Lem:c-bounded}
Assume $(f_1)$--$(f_4)$ and let $\{u_n\}\subset H^1_r(\R^2)$ be an arbitrary Cerami sequence for $I_{\alpha}$ at level $c_\alpha$.
Then, $\{u_n\}$ stays bounded in $H^1_r(\R^2)$ as well as
$$
\bigg|\int_{\R^2}\bigg[G_\alpha(x)\ast F(u_n)\bigg]F(u_n){\rm d}x\bigg|<C,\quad\quad\bigg|\int_{\R^2}\bigg[G_\alpha(x)\ast F(u_n)\bigg]f(u_n)u_n{\rm d}x\bigg|<C.
$$
\end{lemma}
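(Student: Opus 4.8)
The plan is to reduce the statement to the single assertion that $\{u_n\}$ is bounded in $H^1_r(\R^2)$, and then to prove this by exploiting $(f_2)$--$(f_4)$ together with the uniform level estimates already obtained. First I observe that, since $\{u_n\}$ is a Cerami sequence for $I_\alpha$ at $c_\alpha$, one has $I_\alpha(u_n)=c_\alpha+o(1)$ and, because $(1+\|u_n\|)\|I_\alpha'(u_n)\|_{(H^1_r)^*}\to0$, also $I_\alpha'(u_n)u_n=o(1)$, with no a priori bound on $\|u_n\|$. By the definition of $I_\alpha$ and Lemma \ref{Lem:functional2} we have $2I_\alpha(u_n)=\|u_n\|^2-\int_{\R^2}[G_\alpha\ast F(u_n)]F(u_n)\,{\rm d}x$ and $I_\alpha'(u_n)u_n=\|u_n\|^2-\int_{\R^2}[G_\alpha\ast F(u_n)]f(u_n)u_n\,{\rm d}x$; since $c_\alpha$ is bounded by Remark \ref{rem:5.3}, each of the two nonlocal quantities in the statement equals $\|u_n\|^2$ up to an $O(1)$ error, so the lemma follows once $\sup_n\|u_n\|<\infty$. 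Subtracting the two identities gives the working relation $\int_{\R^2}[G_\alpha\ast F(u_n)]\big(f(u_n)u_n-F(u_n)\big)\,{\rm d}x=2c_\alpha+o(1)$, which is uniformly bounded.

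Two elementary consequences of the hypotheses will be used. Integrating $\frac{d}{ds}\frac{F(s)}{f(s)}=1-\frac{F(s)f'(s)}{f^2(s)}\le1-\tau$ on $(0,s)$, assumption $(f_2)$ gives $F(s)\le(1-\tau)sf(s)$, hence $f(u_n)u_n-F(u_n)\ge\frac{\tau}{1-\tau}F(u_n)\ge0$ and the integrand above is nonnegative; assumption $(f_3)$ gives $F(s)/\big(sf(s)\big)\to0$, so for each $\varepsilon>0$ there is $s_\varepsilon$ with $F(s)\le\varepsilon\big(f(s)s-F(s)\big)$ for $s\ge s_\varepsilon$. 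I would then decompose the kernel as $G_\alpha=G_\alpha\chi_{B_1}-(-G_\alpha)\chi_{\R^2\setminus B_1}$: the first term is nonnegative, is dominated by $C_\beta|x|^{-\beta}$ for any $\beta\in(\alpha,1)$ by Lemma \ref{Lem:G}, and has $L^1$ norm bounded uniformly in $\alpha$; the second is nonnegative and bounded by $1/\alpha$. Splitting the integration domains also by $\{u_n\ge s_\varepsilon\}$ versus $\{u_n<s_\varepsilon\}$ — on the low set using $f(u_n)u_n-F(u_n)\le Cu_n^2$ and the Sobolev embedding, on the high set using the $\varepsilon$-estimate to absorb the ``Choquard-ized'' $F(u_n)$ into the bounded quantity from the first paragraph, and on the near kernel using the Hardy--Littlewood--Sobolev inequality (Lemma \ref{Lem:HLS}) after Lemma \ref{Lem:G} — and recalling $\|u_n\|^2=\int_{\R^2}[G_\alpha\ast F(u_n)]f(u_n)u_n\,{\rm d}x+o(1)$, one should be led to an inequality of the shape $\|u_n\|^2\le C+\varepsilon\,C\,\|u_n\|^2+(\text{far-kernel contribution})$, which after fixing $\varepsilon$ small reduces everything to bounding the far-kernel contribution.

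Controlling the far-kernel contribution is the step I expect to be the main obstacle: this is exactly the portion of the (sign-changing, non-integrable-at-infinity) logarithmic convolution for which no Sobolev-type estimate is available, and it is governed by quantities like $\|F(u_n)\|_1$ that are not a priori bounded for a Cerami sequence in the plain space $H^1$. I would handle it by contradiction: assuming $\|u_n\|\to\infty$, set $v_n:=u_n/\|u_n\|$, so $\|v_n\|=1$ and, by the compactness of the radial embedding $H^1_r(\R^2)\hookrightarrow L^q(\R^2)$ for $2<q<\infty$, $v_n\to v$ strongly in each such $L^q$ and a.e. If $v\not\equiv0$, then $u_n\to+\infty$ on a set of positive measure and $(f_4)$ makes the near Choquard energy blow up at a rate strictly faster than $\|u_n\|^2$, against $\int_{\R^2}[G_\alpha\ast F(u_n)]F(u_n)\,{\rm d}x=\|u_n\|^2-2c_\alpha+o(1)$; if $v\equiv0$ the normalized sequence concentrates and a Moser-function comparison as in the proof of Lemma \ref{Lem:upperbound}, together with the uniform ceiling $\sup_{\alpha\in(0,1)}c_\alpha<\frac12$, again forces a contradiction. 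Hence $\sup_n\|u_n\|<\infty$, and, as noted in the first paragraph, the two displayed nonlocal bounds follow, completing the proof.
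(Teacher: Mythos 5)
Your opening reduction is correct and coincides with the paper's: once $\sup_n\|u_n\|<\infty$ is known, the two displayed bounds follow from $2I_\alpha(u_n)=\|u_n\|^2-\int[G_\alpha\ast F(u_n)]F(u_n)\,{\rm d}x\to 2c_\alpha$ and $I'_\alpha(u_n)u_n=o_n(1)$. The trouble is in how you propose to get the boundedness itself. The pivot of your argument is the claim that the integrand $[G_\alpha\ast F(u_n)]\,(f(u_n)u_n-F(u_n))$ is nonnegative, so that the identity $\int[G_\alpha\ast F(u_n)](f(u_n)u_n-F(u_n))\,{\rm d}x=2c_\alpha+o_n(1)$ gives genuine control. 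This is false: $G_\alpha(x)=\frac{|x|^{-\alpha}-1}{\alpha}$ is negative for $|x|>1$, so $G_\alpha\ast F(u_n)$ is sign-changing, and the bounded quantity $2c_\alpha+o_n(1)$ is a difference of two possibly large positive terms. Consequently the ``absorption'' of the high-set Choquard term via $F(s)\le\varepsilon(sf(s)-F(s))$ does not go through, and the inequality $\|u_n\|^2\le C+\varepsilon C\|u_n\|^2+(\text{far part})$ is never actually derived. The closing contradiction argument does not repair this: the case $v\equiv0$ is only asserted (``a Moser-function comparison \dots forces a contradiction''), and $(f_4)$ is a condition used in the paper solely to push the mountain-pass level below $\frac12$ (Lemma \ref{Lem:upperbound}); it is not used, and does not obviously serve, to exclude unbounded Cerami sequences.

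The paper's proof sidesteps every kernel estimate by a choice of test function you did not consider: $v_n:=F(u_n)/f(u_n)$ on $\{u_n>0\}$ and $v_n:=(1-\tau)u_n$ on $\{u_n\le0\}$. By $(f_2)$ one has $F(t)\le(1-\tau)tf(t)$, so $|v_n|\le(1-\tau)|u_n|$, and $\nabla v_n=\big(1-\frac{F(u_n)f'(u_n)}{f^2(u_n)}\big)\nabla u_n$ with the bracket lying in $[1-C,1-\tau]$, so $v_n\in H^1_r(\R^2)$ with $\|v_n\|\le C\|u_n\|$. The point is that $f(u_n)v_n=F(u_n)$, so the nonlocal term in $I'_\alpha(u_n)v_n$ is \emph{exactly} $\int[G_\alpha\ast F(u_n)]F(u_n)\,{\rm d}x$, the same one appearing in $2I_\alpha(u_n)$. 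Subtracting $I'_\alpha(u_n)v_n=o_n(1)\|v_n\|$ from $2I_\alpha(u_n)=2c_\alpha+o_n(1)$ cancels the convolution term identically, and the remaining local terms are bounded by $(1-\tau)\|u_n\|^2$ thanks to $(f_2)$, yielding $\tau\|u_n\|^2\le 2c_\alpha+o_n(1)\|u_n\|$ directly. You should replace the kernel-splitting and blow-up analysis with this cancellation device; your first paragraph can then be kept essentially as is.
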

\begin{proof}
Since $\{u_n\}\subset H^{1}_r(\R^2)$ is a Cerami sequence, as $n\rightarrow\infty$ we have
\begin{equation}\label{eqn:bd0}
\frac{1}{2}\|u_n\|^2-\frac{1}{2}\int_{\R^2}\bigg[G_\alpha(x)\ast F(u_n)\bigg]F(u_n){\rm d}x\rightarrow c_\alpha
\end{equation}
and for all $v\in H^1_r(\R^2)$,
\begin{equation}\label{eqn:bd1}
\aligned
\int_{\R^2}\nabla u_n\nabla v{\rm d}x+\int_{\R^2}u_nv{\rm d}x-\int_{\R^2}\bigg[G_\alpha(x)\ast F(u_n)\bigg]f(u_n)v{\rm d}x=o_n(1)\|v\|.
\endaligned
\end{equation}
Now take $v=u_n$ to get
\begin{equation}\label{eqn:bd2}
\|u_n\|^2
-\int_{\R^2}\bigg[G_\alpha(x)\ast F(u_n)\bigg]f(u_n)u_n{\rm d}x=o_n(1).
\end{equation}
In order to verify the boundedness of $\{u_n\}$, let us introduce a suitable test
function as follows
$$
\aligned
v_n:=\left\{
  \begin{array}{ll}
    \frac{F(u_n)}{f(u_n)},&  u_n>0,\\
    \\
    (1-\tau)u_n, & u_n\leq0,\\
  \end{array}
\right.
\endaligned
$$
with $\tau$ as $(f_2)$. We have $|v_n|\leq C|u_n|$ since $F(t)\leq(1-\tau)f(t)t$ and $f(t)=0$ if and only if $t=0$. Thus $v_n$ is well defined in $H^{1}_r(\R^2)$. Taking $v=v_n$ in (\ref{eqn:bd1}) and recalling that $f(t),F(t)=0$ for $t\leq0$ we have
\begin{equation}\label{eqn:bd4}
\aligned
(1-\tau)&\int_{u_n\leq0}|\nabla u_n|^2{\rm d}x+\int_{u_n>0}|\nabla u_n|^2\bigg(1-\frac{F(u_n)f'(u_n)}{f^2(u_n)}\bigg){\rm d}x
+(1-\tau)\int_{u_n\leq0}u_n^2{\rm d}x\\
&+\int_{\R^2}u_n\frac{F(u_n)}{f(u_n)}{\rm d}x-\int_{\R^2}\bigg[G_\alpha(|x|)\ast F(u_n)\bigg]F(u_n){\rm d}x=o_n(1)\|v_n\|.
\endaligned
\end{equation}
and by recalling (\ref{eqn:bd0}) we also have
$$
\aligned
(1-\tau)\int_{\R^2}&|\nabla u_n|{\rm d}x+(1-\tau)\int_{\R^2}u_n^2{\rm d}x+2c_\al-\|u_n\|^2\geq o_n(1)\|v_n\|,
\endaligned
$$
which implies
\begin{equation}\label{eqn:bd6}
\tau\|u_n\|^2\leq o_n(1)\|u_n\|+2c_\alpha.
\end{equation}
As a consequence, we have proved that $\|u_n\|\leq C$ for some $C>0$ independent of $n$. Moreover, we immediately have from (\ref{eqn:bd0}) and (\ref{eqn:bd2})
$$
\bigg|\int_{\R^2}\bigg[G_\alpha(x)\ast F(u_n)\bigg]F(u_n){\rm d}x\bigg|<C,\quad\quad\bigg|\int_{\R^2}\bigg[G_\alpha(x)\ast F(u_n)\bigg]f(u_n)u_n{\rm d}x\bigg|<C.
$$
\end{proof}

\begin{lemma}\label{Lem:compact}
let $\{u_n\}$ be a bounded Cerami sequence for $I_{\alpha}$ at level $c_\al$.
Then, there exists $C>0$ independent of $n$ such that
$$
\int_{\R^2}f(u_n)u_n{\rm d}x\leq C\quad \text{and}\quad \int_{\R^2}F(u_n)^\kappa{\rm d}x\leq C,
$$
where $\kappa\in[1,\frac{1}{2a})$ with $0<a<\frac{1}{2}$ as in Remark \ref{rem:5.3}.
\end{lemma}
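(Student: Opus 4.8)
The plan is as follows. After passing to a subsequence we may assume $u_n\rightharpoonup u$ in $H^1_r(\R^2)$; by the compact embedding $H^1_r(\R^2)\hookrightarrow L^q(\R^2)$ for $q>2$ we then also have $u_n\to u$ in $L^q(\R^2)$ for every $q>2$ and a.e.\ in $\R^2$, while Lemma~\ref{Lem:c-bounded} gives $\|\nabla u_n\|_2\le\|u_n\|\le C$. By $(f_1)$ and \eqref{eqn:f1}, the parts of $f(u_n)u_n$ and $F(u_n)^\kappa$ coming from the regime $|u_n|\le\bar t$ are dominated by $C|u_n|^2$ and $C|u_n|^{2\kappa}$ respectively, hence bounded uniformly in $n$ by the Sobolev embedding. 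Thus the whole lemma reduces to absorbing the exponential contributions $|u_n|^{p+1}(e^{4\pi|u_n|^2}-1)$ and $|u_n|^{(p-1)\kappa}(e^{4\pi\kappa|u_n|^2}-1)$, and for this the key will be the sub-criticality of the gradient, namely $\sigma:=\limsup_n\|\nabla u_n\|_2^2<1$ (one in fact expects $\sigma\le 2c_\alpha$).

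Establishing $\sigma<1$ is the core of the argument, and it is precisely here that the estimate $c_\alpha<\tfrac12$ of Lemma~\ref{Lem:upperbound} enters; this is the two-dimensional, nonlocal analogue of the de Figueiredo--Miyagaki--Ruf sub-threshold bound at the mountain pass level. I would argue by contradiction, assuming $\sigma\ge1$ along a subsequence, and combine three ingredients. First, the Cerami identities \eqref{eqn:bd0} and \eqref{eqn:bd2}, together with the Brezis--Lieb splitting $\|\nabla u_n\|_2^2=\|\nabla u\|_2^2+\|\nabla(u_n-u)\|_2^2+o_n(1)$, reduce matters to the tail $u_n-u\rightharpoonup0$, carrying a concentrated gradient mass $\ge 1-\|\nabla u\|_2^2$. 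Second, I would split the Newtonian kernel $G_\alpha=K_\alpha^+-K_\alpha^-$ with $K_\alpha^+:=G_\alpha\,\chi_{\{|x|\le1\}}\ge0$ supported in the unit ball and $0\le K_\alpha^-:=-G_\alpha\,\chi_{\{|x|>1\}}\le 1/\alpha$: the sign-changing far part $K_\alpha^-\ast F(u_n)$ is absorbed by the radial decay $|u_n(x)|\le C\|u_n\|\,|x|^{-1/2}$, which forces $F(u_n)\le C|u_n|^2$ outside a fixed ball (by $(f_1)$) and hence confines the bulk of the nonlocal energy to a ball of fixed radius, whereas on the near part $K_\alpha^+\ast F(u_n)$ one exploits $(f_2)$--$(f_3)$ through $F\le(1-\tau)f(\cdot)(\cdot)$, exactly as in the proof of Lemma~\ref{Lem:c-bounded}, to tie $\int K_\alpha^+\ast F(u_n)\,[f(u_n)u_n-F(u_n)]\,{\rm d}x$ to $2c_\alpha$. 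Third, if $\sigma\ge1$ then feeding the concentration of $|\nabla u_n|^2$ into the borderline case of the Trudinger--Moser inequality \eqref{Lem:MTI} and into a Moser-function lower bound for the nonlocal term in the spirit of \eqref{eqn:moserf3} would yield $c_\alpha\ge\tfrac12$, contradicting Lemma~\ref{Lem:upperbound}; hence $\sigma<1$. I expect this step to be the main obstacle, because of the delicate interplay between the sign-indefinite logarithmic kernel and the exponential growth of $f$.

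Once $\sigma<1$ is available, the rest is routine. If $\|\nabla u_n\|_2\to0$ then $u_n\to0$ in $H^1_r(\R^2)$ and both bounds are immediate; otherwise I set $\mu_n:=\|\nabla u_n\|_2$, which stays bounded below along the subsequence, and write $u_n=\mu_n w_n$ with $\|\nabla w_n\|_2=1$ and $\|w_n\|_2$ bounded. For $\kappa\in[1,\tfrac1{2a})$, by $(f_1)$ and \eqref{eqn:f1} one has $F(u_n)^\kappa\le C\big(|u_n|^{2\kappa}+|u_n|^{(p-1)\kappa}(e^{4\pi\kappa|u_n|^2}-1)\big)$ and $f(u_n)u_n\le C\big(|u_n|^2+|u_n|^{p+1}(e^{4\pi|u_n|^2}-1)\big)$. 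Applying H\"older's inequality with exponents $t,t'$, $t>1$ taken close to $1$, the polynomial factors $\|\,|u_n|^{(p-1)\kappa}\,\|_{t'}$ and $\|\,|u_n|^{p+1}\,\|_{t'}$ are bounded by the Sobolev embedding, while $\|\,e^{4\pi\kappa|u_n|^2}-1\,\|_t^t=\int_{\R^2}\big(e^{4\pi\kappa t\mu_n^2w_n^2}-1\big){\rm d}x\le C$ by \eqref{Lem:MTI}, provided $4\pi\kappa t\mu_n^2<4\pi$, i.e.\ $\kappa t\sigma<1$; this can be arranged for $t$ close to $1$ in view of the bound on $\sigma$ obtained above and the restriction $\kappa<\tfrac1{2a}$, and the exponential term in $f(u_n)u_n$ is handled identically with $\kappa=1$. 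This gives $\int_{\R^2}f(u_n)u_n\,{\rm d}x\le C$ and $\int_{\R^2}F(u_n)^\kappa\,{\rm d}x\le C$, completing the proof.
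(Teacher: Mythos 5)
The decisive step of your plan --- establishing $\sigma:=\limsup_n\|\nabla u_n\|_2^2<1$ --- is exactly the point that is neither proved in your sketch nor true for the reasons you give, and the paper deliberately avoids having to prove it. Your heuristic ``$\sigma\le 2c_\alpha$'' does not follow from \eqref{eqn:bd0}: that identity gives $\|u_n\|^2-2c_\alpha=\int_{\R^2}[G_\alpha\ast F(u_n)]F(u_n)\,{\rm d}x+o_n(1)$, and the right-hand side has no sign (for concentrated profiles it is in fact \emph{positive}, cf.\ \eqref{eqn:moserf3}); moreover Lemma \ref{Lem:c-bounded} only yields $\tau\|u_n\|^2\le 2c_\alpha+o_n(1)$ with $\tau<1$, so $\|u_n\|^2$ may well exceed $1$. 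Your proposed contradiction argument (``if $\sigma\ge1$ then $c_\alpha\ge\tfrac12$'') is also circular as stated: to show that gradient concentration at level $\ge1$ forces the energy up to $\tfrac12$ you must pass to the limit in the nonlocal term, which requires precisely the uniform $L^1$-control of $F(u_n)$ that the lemma is supposed to deliver. Once $\sigma<1$ is granted, your final Trudinger--Moser step is fine, but the whole weight of the proof rests on an unproved and probably unavailable claim.

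The paper's proof takes a genuinely different route that sidesteps $\|\nabla u_n\|_2$ altogether. It introduces $H(t)=\int_0^t\sqrt{F(s)f'(s)}/f(s)\,{\rm d}s$ and $v_n=H(u_n)$, and combines \eqref{eqn:bd0} with the identity obtained by testing $I_\alpha'(u_n)$ against $F(u_n)/f(u_n)$ to get
$\|v_n\|^2=2c_\alpha+\int_{\R^2}\bigl(u_nF(u_n)/f(u_n)-u_n^2+H^2(u_n)\bigr){\rm d}x+o_n(1)$;
by Cauchy--Schwarz, $H^2(t)\le t\int_0^t F f'/f^2=t^2-tF(t)/f(t)$, so the integral is nonpositive and $\|v_n\|^2\le 2c_\alpha<1$ by Lemma \ref{Lem:upperbound}. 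Since $(f_3)$ gives $H'(s)\to1$ at infinity, one has $u_n\le t_\e+v_n/(1-\e)$, and the exponential in $f(u_n)u_n$ can be rewritten in terms of $v_n$ and controlled by the Trudinger--Moser inequality applied to $v_n$, whose (full $H^1$) norm is strictly below the critical threshold. This is the mechanism you need to reproduce; a direct bound on $\|\nabla u_n\|_2$ is not the right target here.
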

\begin{proof}
Since $\{u_n\}$ is bounded in $H^1_r(\R^2)$, we may assume up to a subsequence
$u_n\rightharpoonup u$ in $H_r^1(\R^2)$, $u_n\rightarrow u$
in $L_{loc}^s(\R^2)$ for any $2\leq s<+\infty$ and $u_n\rightarrow u$ a.e. in $\R^2$, for which
$$
\lim\limits_{n\rightarrow+\infty}\|u_n\|^2=A^2\geq\|u\|^2.
$$
Let us introduce the following auxiliary function
$$
H(t)=\int_{0}^t\frac{\sqrt{F(s)f'(s)}}{f(s)}{\rm d}s,
$$
and define $v_n:=H(u_n)$. Let us show that
\begin{equation}\label{eqn:keji1}
\|v_n\|^2\leq 1.
\end{equation}
From
$$
\int_{\R^2}\bigg[G_\alpha(|x|)\ast F(u_n)\bigg]F(u_n){\rm d}x=A^2-2c_\al
$$
and
$$
\int_{\R^2}|\nabla u_{n}|^2\bigg(1-\frac{F(u_n)f'(u_n)}{f^2(u_n)}\bigg)-\int_{\R^2}\bigg[G_\alpha(|x|)\ast F(u_n)\bigg]F(u_n){\rm d}x+\int_{\R^2}u_n\frac{F(u_n)}{f(u_n)}{\rm d}x=o_n(1),
$$
we have
$$
\aligned
\int_{\R^2}|\nabla u_n|^2\bigg(1-\frac{F(u_n)f'(u_n)}{f^2(u_n)}\bigg){\rm d}x
+\int_{\R^2}u_n\frac{F(u_n)}{f(u_n)}{\rm d}x+2c_\al-\|u_n\|^2=o_n(1),
\endaligned
$$
and in turn
$$
\aligned
\|v_n\|^2&=\int_{\R^2}|\nabla H(u_n)|^2{\rm d}x+\int_{\R^2}H^2(u_n){\rm d}x\\
&=2c_\al+\int_{\R^2}\bigg(u_n\frac{F(u_n)}{f(u_n)}-u_n^2+H^2(u_n)\bigg)+o_n(1)\\
&\leq 2c_\al<1
\endaligned
$$
for $n$ large enough. Next we give an $L^1$-estimate of the sequence $\{f(u_n)u_n\}$
by using the estimate for $\{\|v_n\|\}$. By $(f_3)$, for any $\e>0$, there exists $t_\e>0$ such that
 $$
 \frac{\sqrt{F(t)f'(t)}}{f(t)}\in[1-\e,1+\e],\quad\text{for\,all\,} t\geq t_\e.
 $$
 By $(f_2)$ we have
 \begin{equation}\label{eqn:keji2}
 v_n\geq \int_{0}^{t_\e}\tau {\rm d}t+\int^{u_n}_{t_\e}(1-\e){\rm d}t\geq(1-\e)(u_n-t_\e),
\end{equation}
 which implies
 $$
 u_n\leq t_\e+\frac{v_n}{1-\e}
 $$
 for any $x\in\R^2$. Hence, by ($f_1$) we have that for any given $\e>0$ above,
 there exists $C_\e$ such that
 \begin{equation}\label{eqn:keji3}
\aligned
\int_{\R^2}f(u_n)u_n{\rm d}x&\leq \int_{|u_n|\leq t_\e} f(u_n)u_n{\rm d}x+\int_{|u_n|\geq t_\e}f(u_n)u_n{\rm d}x\\
&\leq C_\e\|u_n\|^{2}+\int_{|u_n|\geq t_\e}f\bigg(t_\e+\frac{v_n}{1-\e}\bigg)\bigg(t_\e+\frac{v_n}{1-\e}\bigg){\rm d}x\\
&\leq C_\e\|u_n\|^{2}+C_\e\int_{|u_n|\geq t_\e} e^{4\pi\big(t_\e+\frac{v_n}{1-\e}\big)^{2}}\bigg(t_\e+\frac{v_n}{1-\e}\bigg)^{p+1}{\rm d}x\\
&\leq C_\e\|u_n\|^{2}+C_\e\int_{|u_n|\geq t_\e} e^{4\pi(1+\e)\big(t_\e+\frac{v_n}{1-\e}\big)^{2}}{\rm d}x,
\endaligned
\end{equation}
where in the last inequality we use the fact that for large values of $u_n$,
also $v_n$ is large such that
$$
\bigg(t_\e+\frac{v_n}{1-\e}\bigg)^{p+1}\leq C_\e e^{4\pi\e\big(t_\e+\frac{v_n}{1-\e}\big)^{2}}.
$$
In view of (\ref{eqn:keji2}), $v_n\geq \tau t_\e$ if $u_n\geq t_\e$, and then it follows from
(\ref{eqn:keji3}) that
 \begin{equation}\label{eqn:keji4}
\aligned
\int_{\R^2}f(u_n)u_n{\rm d}x&\leq C_\e\|u_n\|^{2}+C_\e\int_{|u_n|\geq t_\e} e^{4\pi(1+\e)^2\frac{v_n^2}{(1-\e)^2}}{\rm d}x\\
&\leq C_\e\|u_n\|^{2}+C_\e\int_{\R^2} e^{4\pi(1+\e)^2\frac{v_n^2}{(1-\e)^2}}-1{\rm d}x\\
&\leq C_\e\|u_n\|^{2}+C_\e\int_{\R^2} e^{4\pi(1+\e)^2\|v_n\|^2\frac{v_n^2}{\|v_n\|^2(1-\e)^2}}-1{\rm d}x.
\endaligned
\end{equation}
Since $\|v_n\|^2\leq 2c_\al+o_n(1)$, $\|v_n\|^2\leq 2c_\al+\sigma<1$ for $n$ large enough and
$\sigma>0$ is sufficiently small which is also independent of $\al$. Finally,
the following holds
$$
\frac{(1+\e)^2}{(1-\e)^2}\|v_n\|^2\leq \frac{(1+\e)^2}{(1-\e)^2}(2c_\alpha+\sigma)<1
$$
for $\e>0$ small enough. As a consequence, from (\ref{eqn:keji4}) and ($f_2$) we get
$$
\int_{\R^2}f(u_n)u_n{\rm d}x\leq C\quad \text{and\,then}\quad \int_{\R^2}F(u_n){\rm d}x\leq C
$$
for some $C$ independent of $n$. The remain case $\kappa\in(1,\frac{1}{2a})$
has been proven in \cite{Cassani21}, that is,
$$
\int_{\R^2}F(u_n)^\kappa{\rm d}x\leq C.
$$
Combining the above facts yields the proof.
\end{proof}
\noindent Since $\{u_n\}$ is bounded in $H_r^1(\R^2)$, up to a subsequence still denoted by $\{u_n\}$, there exists $u_0\in H_r^1(\R^2)$ such that
\begin{equation}\label{eqn:fun2-}
u_n\rightharpoonup u_0\,\,\text{ weakly in } H_r^1(\R^2),\quad\quad u_n\rightarrow u_0\,\,\text{ strongly in } L^p(\R^2),\,\,p\in(2,+\infty).
\end{equation}
%Moreover, there exists $B\geq0$ such that
%\begin{equation}\label{eqn:fun2+++}
%\int_{\R^2}F(u_n){\rm d}x\rightarrow B,\quad\text{as}\,\,n\rightarrow\infty.
%\end{equation}
\begin{lemma}\label{Lem:compact0}
Assume $(f_1)$--$(f_4)$ and let $\{u_n\}$ be a bounded Cerami sequence for $I_{\alpha}$ at level $c_\al$.
Then,
\begin{equation}\label{eqn:cl2}
\int_{\R^2}\bigg[\frac{1}{|x|^\alpha}\ast F(u_n)\bigg]F(u_n){\rm d}x\rightarrow\int_{\R^2}\bigg[\frac{1}{|x|^\alpha}\ast F(u_0)\bigg]F(u_0){\rm d}x
\end{equation}
and
\begin{equation}\label{eqn:cl2++}
\int_{\R^2}F(u_n){\rm d}x\rightarrow\int_{\R^2}F(u_0){\rm d}x,\quad\text{as}\,\,n\rightarrow\infty.
\end{equation}
\end{lemma}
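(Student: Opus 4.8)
The plan is to reduce \eqref{eqn:cl2} to a strong $L^{\kappa_0}$-convergence of $\{F(u_n)\}$ and then feed this into the Hardy--Littlewood--Sobolev inequality, while \eqref{eqn:cl2++} will come out along the way (and is in fact needed to get the $L^{\kappa_0}$-convergence). Passing to a further subsequence we may assume, in addition to \eqref{eqn:fun2-}, that $u_n\to u_0$ a.e.\ in $\R^2$, so that $F(u_n)\to F(u_0)$ a.e.\ by continuity of $F$ with $F(0)=0$.

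\emph{Proof of \eqref{eqn:cl2++} and an upgrade.} I would split $\R^2=B_R(0)\cup(\R^2\setminus B_R(0))$. On the exterior region, since $\{u_n\}$ is bounded in $H^1_r(\R^2)$ the radial (Strauss) decay estimate gives $|u_n(x)|\le C|x|^{-1/2}$ for $|x|\ge 1$, uniformly in $n$; and because $f(t)=o(t)$ as $t\to0^+$ by $(f_1)$ one has $F(t)=o(t^2)$ near the origin, so for $R$ large $u_n$ is uniformly small on $\R^2\setminus B_R$ and $\int_{|x|\ge R}F(u_n)\,{\rm d}x\le \varepsilon\int_{\R^2}u_n^2\,{\rm d}x\le C\varepsilon$ uniformly in $n$; the same holds for $u_0$. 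On $B_R(0)$, Lemma \ref{Lem:compact} gives $\sup_n\|F(u_n)\|_{L^\kappa(B_R)}<\infty$ with $\kappa>1$, so $\{F(u_n)\}$ is equi-integrable on $B_R$; combined with the a.e.\ convergence, Vitali's theorem yields $\int_{B_R}F(u_n)\to\int_{B_R}F(u_0)$. Letting $n\to\infty$ and then $R\to\infty$ gives \eqref{eqn:cl2++}. Since moreover $F(u_n)\ge0$, the combination of a.e.\ convergence with \eqref{eqn:cl2++} yields (Scheff\'e's lemma) $F(u_n)\to F(u_0)$ in $L^1(\R^2)$, while Fatou gives $F(u_0)\in L^\kappa(\R^2)$; interpolating between $L^1$ and $L^\kappa$ and using $\sup_n\|F(u_n)\|_\kappa<\infty$ I conclude $F(u_n)\to F(u_0)$ in $L^{\kappa_0}(\R^2)$ for every $\kappa_0\in[1,\kappa)$.

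\emph{Proof of \eqref{eqn:cl2}.} Using the symmetry of the Riesz kernel I would write
\begin{align*}
&\int_{\R^2}\Big[\tfrac{1}{|x|^{\alpha}}\ast F(u_n)\Big]F(u_n)\,{\rm d}x-\int_{\R^2}\Big[\tfrac{1}{|x|^{\alpha}}\ast F(u_0)\Big]F(u_0)\,{\rm d}x\\
&\qquad=\int_{\R^2}\Big[\tfrac{1}{|x|^{\alpha}}\ast F(u_n)\Big]\big(F(u_n)-F(u_0)\big)\,{\rm d}x+\int_{\R^2}\Big[\tfrac{1}{|x|^{\alpha}}\ast\big(F(u_n)-F(u_0)\big)\Big]F(u_0)\,{\rm d}x,
\end{align*}
and estimate each term by Lemma \ref{Lem:HLS} with $N=2$ and $s=r=\frac{4}{4-\alpha}$ (admissible since $\frac{4-\alpha}{4}+\frac{\alpha}{2}+\frac{4-\alpha}{4}=2$, and $\frac{4}{4-\alpha}\in[1,\tfrac{1}{2a})$ for $\alpha\in(0,1)$, so Lemma \ref{Lem:compact} applies with this exponent). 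This bounds the right-hand side by $C\big(\sup_n\|F(u_n)\|_{4/(4-\alpha)}+\|F(u_0)\|_{4/(4-\alpha)}\big)\,\|F(u_n)-F(u_0)\|_{4/(4-\alpha)}$, which tends to $0$ by the $L^{\kappa_0}$-convergence established above (with $\kappa_0=\tfrac{4}{4-\alpha}$). Hence \eqref{eqn:cl2}.

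\emph{Main obstacle.} The delicate point is the equi-integrability/tail control underlying the first paragraph: the exponential growth of $F$ rules out any naive domination, and the only reason $\{F(u_n)\}$ is controlled in $L^\kappa$ with $\kappa>1$ is the sharp estimate of Lemma \ref{Lem:compact}, which rests on $2c_\alpha<1$ staying strictly below the Trudinger--Moser threshold; it is precisely the radial framework $H^1_r(\R^2)$ that makes the tail estimate at infinity uniform in $n$. Once these two ingredients are in place, the rest is a routine splitting plus Hardy--Littlewood--Sobolev.
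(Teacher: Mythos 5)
Your argument is correct, but it follows a genuinely different route from the paper's. The paper never establishes strong convergence of $F(u_n)$ in any $L^s$; instead it exploits the Cerami relation $I_\alpha'(u_n)u_n=o_n(1)$ together with Lemma \ref{Lem:c-bounded} to get the uniform bound $\frac{1}{\al}\int_{\R^2}\big[\tfrac{1}{|x|^\al}\ast F(u_n)\big]f(u_n)u_n\,{\rm d}x\le C$, truncates in the \emph{range} of $u_n$ (the sets $\{|u_n|\ge M_\e\}$ versus $\{|u_n|<M_\e\}$, using \eqref{eqn:f5} to absorb the large-value region into the bounded quantity above), and then handles the bounded region by a generalized dominated convergence argument with the control functions $G(x,u_n)=\big[\tfrac{1}{|x|^\al}\ast F(u_n)\big](\e|u_n|^2+C_\e|u_n|^q)$; \eqref{eqn:cl2++} is obtained ``analogously'' at the end. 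You instead prove \eqref{eqn:cl2++} first by truncating in \emph{space} (Strauss decay plus $F(t)=o(t^2)$ for the tail, the $L^\kappa$ bound of Lemma \ref{Lem:compact} plus Vitali on $B_R$), upgrade to $F(u_n)\to F(u_0)$ in $L^1$ via Scheff\'e and then to $L^{\kappa_0}$, $\kappa_0<\kappa$, by interpolation, after which \eqref{eqn:cl2} is a one-line Hardy--Littlewood--Sobolev estimate. Your route is cleaner and more self-contained for the stated lemma; what it does not produce are the auxiliary uniform estimates \eqref{eqn:fun2++} and \eqref{eqn:fun3+0} that the paper extracts inside this proof and reuses later (in Lemma \ref{Lem:compact++} and in the proof of Theorem 1.1), so those would have to be derived separately.

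One imprecision to fix: the claim that $\frac{4}{4-\al}\in[1,\frac{1}{2a})$ ``for $\al\in(0,1)$'' is not automatic, since $\frac{4}{4-\al}$ can be as large as $\frac43$ while $\frac{1}{2a}$ is only guaranteed to exceed $1$. You need $\al$ small enough that $\frac{4}{4-\al}<\frac{1}{2a}$ (equivalently $\frac{4}{4-\al}\le\kappa$ for some admissible $\kappa$). The paper has exactly the same restriction (it says explicitly that $\|F(u_n)\|_{4/(4-\al)}$ is bounded ``by taking $\al$ small enough''), and since the lemma is only used in the limit $\al\to0^+$ this is harmless, but it should be stated.
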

\begin{proof}
From $I'_\al(u_n)u_n=o_n(1)$ we obtain the following
$$
\aligned
&\|u_n\|^2+\frac{1}{\al}\int_{\R^2}F(u_n){\rm d}x\int_{\R^2}f(u_n)u_n{\rm d}x\\
&=2c_\al+
\frac{1}{\al}\int_{\R^2}\bigg[\frac{1}{|x|^\al}\ast F(u_n)\bigg]f(u_n)u_n{\rm d}x,
\endaligned
$$
which implies immediately by Lemma \ref{Lem:c-bounded} and Lemma \ref{Lem:upperbound} that
there exists $C>0$ independent of $n$ such that
\begin{equation}\label{eqn:fun2++}
\aligned
\frac{1}{\al}\int_{\R^2}\bigg[\frac{1}{|x|^\al}\ast F(u_n)\bigg]f(u_n)u_n{\rm d}x\leq C.
\endaligned
\end{equation}
From (\ref{eqn:f5}) and (\ref{eqn:fun2++}) we deduce that for any $\e>0$, there exists $M_\e>0$ such that
\begin{equation}\label{eqn:cl3+0}
\int_{|u_n|\geq M_\e}\bigg[\frac{1}{|x|^\al}\ast F(u_n)\bigg]F(u_n){\rm d}x\leq\frac{M_0}{M_\e}\int_{|u_n|\geq M_\e}\bigg[\frac{1}{|x|^\al}\ast F(u_n)\bigg]f(u_n)u_n{\rm d}x\leq \e.
\end{equation}
and
\begin{equation}\label{eqn:cl3}
\aligned
\int_{|u_0|\geq M_\e}\bigg[\frac{1}{|x|^\al}\ast F(u_0)\bigg]F(u_0){\rm d}x
\leq \e.
\endaligned
\end{equation}
Observe that from Lemma \ref{Lem:c-bounded} and H\"{o}lder's inequality we have
\begin{equation}\label{eqn:fun3+0}
\aligned
&\int_{\R^2}\frac{1}{|x-y|^\al}F(u_n(y)){\rm d}y\\
&\leq\int_{|x-y|\leq1}\frac{1}{|x-y|^\al}F(u_n(y)){\rm d}y+\int_{|x-y|\geq1}\frac{1}{|x-y|^\al}F(u_n(y)){\rm d}y\\
&\leq C+\bigg(\int_{|x-y|\leq1}\frac{1}{|x-y|}dy\bigg)^{\al}\cdot\bigg( \int_{\R^2} F(u_n(y))^{\frac{1}{1-\al}}{\rm d}y\bigg)^{1-\al}\\
&\leq C+C\bigg( \int_{\R^2} F(u_n(y))^{\frac{1}{1-\al}}{\rm d}y\bigg)^{1-\al}\leq C,
\endaligned
\end{equation}
where in the above inequality we have taken $\al\in\bigg(0,1-\frac{1}{\kappa}\bigg)$.
Let us define the following sequence of functions
\begin{equation}\label{eqn:cl3+}
\aligned
G(x,u_n)&:=\bigg[\frac{1}{|x|^\al}\ast F(u_n)\bigg]\bigg(\e|u_n|^2+C_\e|u_n|^q\bigg)\\
&\geq\bigg[\frac{1}{|x|^\al}\ast F(u_n)\bigg]\bigg(f(u_n)u_n\chi_{|u_n|<M_\e}\bigg),
\endaligned
\end{equation}
where $C_\e>0$ depends only on $\e$ and $q>2$. Moreover, using the Hardy-Littlewood-Sobolev inequality and (\ref{eqn:fun3+0}) we deduce that
\begin{equation}\label{eqn:cl4}
\aligned
&\bigg|\int_{\R^2}G(x,u_n)-G(x,u_0){\rm d}x\bigg|\\
\leq& C\e+C_\e\bigg|\int_{\R^2}\bigg[\frac{1}{|x|^\al}\ast F(u_n)\bigg]\bigg(|u_n|^q-|u_0|^q\bigg){\rm d}x\bigg|\\
&+C_\e\bigg|\int_{\R^2}\bigg[\frac{1}{|x|^\al}\ast \big(F(u_n)- F(u_0)\big)\bigg]|u_0|^q{\rm d}x\bigg|\\
\leq&C\e+ C_\e C\|F(u_n)\|_{\frac{4}{4-\al}}\||u_n|^q-|u_0|^q\|_{\frac{4}{4-\al}}\\
&+C_\e\int_{\R^2}\bigg[\frac{1}{|x|^\al}\ast \big(F(u_n)- F(u_0)\big)\bigg]|u_0|^q{\rm d}x\\
\leq&C\e+ C_\e C \cdot o_n(1)+C_\e\int_{\R^2}\bigg[\frac{1}{|x|^\al}\ast \big(F(u_n)- F(u_0)\big)\bigg]|u_0|^q{\rm d}x\\
:=& C\e+C_\e C \cdot o_n(1)+D_1,
\endaligned
\end{equation}
where in the above inequality we have the fact that $\|F(u_n)\|_{\frac{4}{4-\al}}$ is uniformly bounded by taking $\al$ small enough.
There exists $R>0$ such that
\begin{equation}\label{eqn:cl5}
\aligned
D_1&:=E_1+E_2:=
C_\e\int_{|x|\geq R_\e}\bigg[\frac{1}{|x|^\al}\ast \big(F(u_n)- F(u_0)\big)\bigg]|u_0|^q{\rm d}x\\
&+C_\e\int_{|x|\leq R_\e}\bigg[\frac{1}{|x|^\al}\ast \big(F(u_n)- F(u_0)\big)\bigg]|u_0|^q{\rm d}x,
%\leq& C_\e C\int_{|x|\geq R_\e} |u_0(x)|^q{\rm d}x\\
%&+C_\e\int_{|x|\leq R_\e}\int_{\R^2}\frac{1}{|x-y|^\al}\big( F(u_n(y))- F(u_0(y))\big) {\rm d}y |u_0(x)|^q{\rm d}x,\\
\endaligned
\end{equation}
where for any fixed $\e>0$, by taking $R=R_\e>0$ large enough, it follows from (\ref{eqn:fun3+0}) that
$$
|E_1|\leq C_\e C\int_{|x|\geq R_\e} |u_0|^q{\rm d}x<\e.
$$
Moreover, by virtue of (\ref{eqn:fun3+0}), we employ the Lebesgue dominated convergence theorem to deduce that
$$
E_2=C_\e\int_{|x|\leq R_\e}\bigg[\frac{1}{|x|^\al}\ast \big(F(u_n)- F(u_0)\big)\bigg]|u_0|^q{\rm d}x=C_\e\cdot o_n(1).
$$
Based on the above facts, combining (\ref{eqn:cl5}) and (\ref{eqn:cl4}), one has
$$
\bigg|\int_{\R^2}G(x,u_n)-G(x,u_0){\rm d}x\bigg|=o_n(1),
$$
that is,
the control function sequence $\{G(x,u_n)\}$ has a strong convergence subsequence in $L^1(\R^2)$.
Hence, using the Lebesgue dominated convergence theorem once again, from (\ref{eqn:cl3+}) we immediately obtain
$$
\aligned
&\int_{\R^2}\bigg[\frac{1}{|x|^\al}\ast F(u_n)\bigg]f(u_n)u_n\chi_{|u_n|<M_\e}{\rm d}x\\
&\rightarrow\int_{\R^2}\bigg[\frac{1}{|x|^\al}\ast F(u_0)\bigg]f(u_0)u_0\chi_{|u_0|<M_\e}{\rm d}x,\quad \text{ as } n\rightarrow\infty,
\endaligned
$$
which, together with (\ref{eqn:cl3+0}) and (\ref{eqn:cl3}), implies (\ref{eqn:cl2}). Analogously we also have
$$
\int_{\R^2}F(u_n){\rm d}x\rightarrow\int_{\R^2}F(u_0){\rm d}x,\quad{as}\,\,n\rightarrow\infty.
$$
\end{proof}

\begin{lemma}\label{Lem:compact++}
Assume that $(f_1)$--$(f_4)$ and let $\{u_n\}$ be a bounded Cerami sequence for $I_{\alpha}$ at level $c_\al$.
Then, there exists a nontrivial $u_0\in H_r^1(\R^2)$ such that $u_n\rightarrow u_0$ in $H_r^1(\R^2)$, as $n\to\infty$.
\end{lemma}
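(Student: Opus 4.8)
The plan is to upgrade the weak convergence in (\ref{eqn:fun2-}) to strong convergence in $H^1_r(\R^2)$ by showing $\|u_n\|\to\|u_0\|$; since $u_n\rightharpoonup u_0$ in the Hilbert space $H^1_r(\R^2)$, this yields $\|u_n-u_0\|^2=\|u_n\|^2-2\int_{\R^2}(\nabla u_n\nabla u_0+u_nu_0)\,{\rm d}x+\|u_0\|^2\to0$, and then nontriviality of $u_0$ comes for free: if $u_0=0$ then $\|u_n\|\to0$, whence $c_\al=\lim_nI_\al(u_n)=I_\al(0)=0$ by continuity of $I_\al$, contradicting $c_\al\ge a>0$ from Remark \ref{rem:5.3}. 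The whole matter then reduces to the compactness estimate
\[
\int_{\R^2}f(u_n)\,|u_n-u_0|\,{\rm d}x\longrightarrow0,\qquad n\to\infty.
\]
For this I would first upgrade the $L^1$-bound of Lemma \ref{Lem:compact}: repeating that argument while carrying an exponent $\kappa>1$ sufficiently close to $1$ instead of $\kappa=1$ is legitimate since $\|v_n\|^2\le 2c_\al+\sigma<1$ by Lemma \ref{Lem:upperbound}, so that the Trudinger--Moser exponent produced stays below $4\pi$ for $\kappa$ close to $1$ and $\e$ small (the polynomial factors being absorbed without harm, as they only generate a Sobolev exponent $2\kappa>2$); one thereby obtains $\int_{\R^2}(f(u_n)u_n)^{\kappa}\,{\rm d}x\le C$, hence $\{f(u_n)u_n\}$ is equi-integrable. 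I shall also use $\int_{\R^2}F(u_n)\,{\rm d}x\le C$ (from Lemma \ref{Lem:compact}) and $\tfrac{1}{|x|^\al}\ast F(u_n)\le C$ uniformly in $x,n$, exactly as in (\ref{eqn:fun3+0}).

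To prove the displayed estimate I would split $\R^2$ into $\{|u_n|\le\delta\}$, $\{\delta<|u_n|\le R\}$ and $\{|u_n|>R\}$, for suitable $\delta,R$. On the first set, $f(t)=o(t)$ as $t\to0^+$ gives $\int_{\{|u_n|\le\delta\}}f(u_n)|u_n-u_0|\le\e\,\|u_n\|_2\|u_n-u_0\|_2\le C\e$ once $\delta=\delta(\e)$ is small. On the second, which has measure at most $\delta^{-2}\|u_n\|_2^2\le C\delta^{-2}$, the function $f$ is bounded by $C_R$, and since $u_n\to u_0$ in $L^p(\R^2)$ for every $p>2$, Hölder's inequality gives $\int_{\{\delta<|u_n|\le R\}}f(u_n)|u_n-u_0|\le C_R(C\delta^{-2})^{1-1/p}\|u_n-u_0\|_p=o_n(1)$ with $\delta,R$ fixed. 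On the third, from $f(u_n)|u_n-u_0|\le f(u_n)u_n+R^{-1}f(u_n)u_n|u_0|$, Hölder's inequality with exponents $\kappa,\kappa'$ (note $u_0\in L^{\kappa'}$) together with $|\{|u_n|>R\}|\le CR^{-2}$ and the $L^\kappa$-bound on $\{f(u_n)u_n\}$ yields $\int_{\{|u_n|>R\}}f(u_n)|u_n-u_0|\le C(R^{-2/\kappa'}+R^{-1})$, which is $o(1)$ as $R\to\infty$ \emph{uniformly in $n$}. Taking $\limsup_n$, then $R\to\infty$, then $\e\to0$ proves the estimate. This last region is the only genuinely delicate point, and it is precisely the strict inequality $\sup_{\al}c_\al<\tfrac12$ of Lemma \ref{Lem:upperbound} (through $\|v_n\|^2<1$) that makes the needed equi-integrability available.

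It remains to pass to the limit. Since $\{u_n\}$ is a bounded Cerami sequence, $\|I'_\al(u_n)\|_{*}\to0$, hence $I'_\al(u_n)(u_n-u_0)=o_n(1)$. Its quadratic part equals $\|u_n\|^2-\int_{\R^2}(\nabla u_n\nabla u_0+u_nu_0)\,{\rm d}x=\|u_n\|^2-\|u_0\|^2+o_n(1)$ by $u_n\rightharpoonup u_0$; the term $\tfrac1\al\int_{\R^2}F(u_n)\,{\rm d}x\cdot\int_{\R^2}f(u_n)(u_n-u_0)\,{\rm d}x$ tends to $0$ because $\int F(u_n)\le C$ and $\int f(u_n)|u_n-u_0|\to0$; and $\tfrac1\al\int_{\R^2}\bigl[\tfrac{1}{|x|^\al}\ast F(u_n)\bigr]f(u_n)(u_n-u_0)\,{\rm d}x\to0$ because $\tfrac{1}{|x|^\al}\ast F(u_n)\le C$ uniformly and once more $\int f(u_n)|u_n-u_0|\to0$. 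Collecting these facts gives $\|u_n\|^2\to\|u_0\|^2$, and by the first paragraph $u_n\to u_0$ strongly in $H^1_r(\R^2)$ with $u_0$ nontrivial, which is the claimed assertion.
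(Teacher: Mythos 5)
Your argument is correct, and it takes a genuinely different route from the paper. The paper first shows $I'_\al(u_0)=0$ and $I_\al(u_0)\geq0$, and then splits into two cases: if $u_0\equiv0$ it uses $\|u_n\|^2<1-\epsilon_0$ (a consequence of $c_\al<\frac12$) and Trudinger--Moser applied directly to $u_n$ to force $\int f(u_n)u_n\to0$ and reach a contradiction; if $u_0\not\equiv0$ it invokes Lions' improved Trudinger--Moser inequality for the normalized sequence $w_n$, combined with the energy gap $I_\al(u_n)-I_\al(u_0)<\frac12$, to obtain uniform exponential integrability and pass to the limit in $\int f(u_n)u_n$ and the convolution terms. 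You instead extract from the auxiliary function $v_n=H(u_n)$ of Lemma \ref{Lem:compact} a uniform $L^\kappa$ bound, $\kappa>1$, on $f(u_n)u_n$ (legitimate: the Trudinger--Moser exponent becomes $4\pi\kappa(1+\e)^2\|v_n\|^2/(1-\e)^2$ with $\|v_n\|^2\leq 2c_\al+\sigma<1$, so $\kappa$ close to $1$ keeps it subcritical, exactly as in \eqref{eqn:keji4}; this is the one step you should write out in full), and then run a three-region splitting to get $\int_{\R^2}f(u_n)|u_n-u_0|\,{\rm d}x\to0$, which together with the uniform bound \eqref{eqn:fun3+0} on $\frac{1}{|x|^\al}\ast F(u_n)$ lets you test $I'_\al(u_n)$ against $u_n-u_0$ and conclude $\|u_n\|\to\|u_0\|$. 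This avoids both the case distinction and the Lions inequality, handles nontriviality as a one-line corollary, and still yields $I'_\al(u_0)=0$, $I_\al(u_0)=c_\al$ a posteriori from strong convergence and the $C^1$ regularity of $I_\al$; both proofs ultimately rest on the same strict inequality $\sup_\al c_\al<\frac12$. Two small caveats, neither fatal: the uniform convolution bound \eqref{eqn:fun3+0} requires $\al\in(0,1-\frac1\kappa)$, a restriction the paper's own proof also tacitly imposes; and in region three you need $u_0\in L^{\kappa'}$ with $\kappa'$ large, which holds since $H^1(\R^2)\hookrightarrow L^q(\R^2)$ for all $q\in[2,\infty)$.
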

\begin{proof}
%Let us define the functional $J_\al:\,H_r^1(\R^2)\rightarrow\R$ by
%$$
%J_\al(u):=\frac{1}{2}\int_{\R^2}|\nabla u|^2+u^2{\rm d}x+\frac{B}{\alpha}
%\int_{\R^2}F(u){\rm d}x-\frac{1}{2\alpha}\int_{\R^2}\bigg[\frac{1}{|x|^{\alpha}}*F(u)\bigg]F(u(x)){\rm d}x.
%$$
%Here, $B$ has been defined in (\ref{eqn:fun2++}).
%Then one has
%$$
%J_\al(u_n)\rightarrow c_\al+\frac{B^2}{2\al},\quad J'_\al(u_n)\rightarrow 0\quad\text{as}\,\,n\rightarrow\infty.
%$$
%From $I'_\al(u_n)u_n=o_n(1)$ we obtain the following
%$$
%\aligned
%&\|u_n\|^2+\frac{1}{2\pi\al}\int_{\R^2}F(u_n){\rm d}x\int_{\R^2}f(u_n)u_n{\rm d}x\\
%&=2c_\al+
%\frac{1}{2\pi\al}\int_{\R^2}(\frac{1}{|x|^\al}\ast F(u_n))f(u_n(x))u_n(x){\rm d}x,
%\endaligned
%$$
%which implies immediately by Lemma \ref{Lem:c-bounded} and Lemma \ref{Lem:upperbound} that
%there exists $C>0$ independent of $n$ such that
%\begin{equation}\label{eqn:fun2++}
%\aligned
%\frac{1}{2\pi\al}\int_{\R^2}(\frac{1}{|x|^\al}\ast F(u_n))f(u_n(x))u_n(x){\rm d}x\leq C.
%\endaligned
%\end{equation}
We first \textit{claim} that for any $\varphi\in C_0^\infty(\R^2)$
\begin{equation}\label{eqn:fun3}
\aligned
\frac{1}{\al}\int_{\R^2}\bigg[\frac{1}{|x|^\al}\ast F(u_n)\bigg]f(u_n)\varphi{\rm d}x
\rightarrow \frac{1}{\al}\int_{\R^2}\bigg[\frac{1}{|x|^\al}\ast F(u_0)\bigg]f(u_0)\varphi{\rm d}x, \quad \text{as}\,\, n\rightarrow\infty.
\endaligned
\end{equation}
Indeed, let us define the sequence of functions
$$
g(x,u_n):=\bigg[\frac{1}{|x|^\al}\ast F(u_n)\bigg]f(u_n)
$$
restricted to any compact domain $\Omega$.  Hence, (\ref{eqn:fun3+0}) implies
 $g(x,u_n)\leq Cf(u_n)$ for $x\in \Omega$. Moreover, using $(f_1)$--$(f_2)$ and Lemma \ref{Lem:c-bounded} we have that there exists $M>0$ such that
$$
\aligned
&\int_{\Omega}g(x,u_n){\rm d}x\leq \int_{\Omega}Cf(u_n){\rm d}x\\
&\leq \int_{\Omega\cap\{x|\,|u_n|\leq M\}}Cf(u_n){\rm d}x+\int_{\Omega\cap\{x|\,|u_n|\geq M\}}Cf(u_n){\rm d}x\\
&\leq C+\frac{C}{M}\int_{\Omega\cap\{x|\,|u_n|\geq M\}}f(u_n)u_n{\rm d}x\leq C.
\endaligned
$$
Then $g\in L^1(\Omega)$. Thanks to (\ref{eqn:fun2++}), using similar arguments as that in Lemma 2.1 of \cite{Figueiredo95}, the claim follows. Similarly, we can also prove
\begin{equation}\label{eqn:fun3+f}
\aligned
\frac{1}{\al}\int_{\R^2}f(u_n){\rm d}x\rightarrow \frac{1}{\al}\int_{\R^2}f(u_0){\rm d}x,\quad\text{in}\,\,L_{loc}^1(\R^2),\quad \text{as}\,\,n\rightarrow\infty.
\endaligned
\end{equation}
It follows from (\ref{eqn:fun3}), (\ref{eqn:fun3+f}), and Lemma \ref{Lem:compact0} that for any $\varphi\in C_0^\infty(\R^2)$,
\begin{equation}\label{eqn:con8}
I'_\al(u_n)\varphi\rightarrow
I'_\al(u_0)\varphi,\quad \text{as}\,\,n\rightarrow\infty.
\end{equation}
That is, $I'_\al(u_0)=0$. Let us use the following
$$
\aligned
v_0=\left\{
  \begin{array}{ll}
    \frac{F(u_0)}{f(u_0)},&  u_n>0,\\
    \\
    (1-\tau)u_0, & u_0\leq0,\\
  \end{array}
\right.
\endaligned
$$
in place of $v_n$ in Lemma \ref{Lem:c-bounded}, to obtain in a similar fashion
\begin{equation}\label{eqn:fun4}
\aligned
2I_\al(u_0)\geq \tau\|u_0\|^2\geq0.
\endaligned
\end{equation}

%Therefore, $u_0\in H_r^1(\R^2)$ satisfies the corresponding Pohozaev identity
%$$
%\aligned
%0&=P_\al(u_0)\\
%&=\frac{1}{2}\int_{\R^2}u_0^2{\rm d}x+\frac{B}{\pi \al}\int_{\R^2}F(u_0){\rm d}x-\frac{4-\al}{4\pi \al}\int_{\R^2}\frac{1}{|x-y|^\al}F(u_0(y))F(u_0(x)){\rm d}x{\rm d}y.
%\endaligned
%$$
%By the definition of $J_\al$, one has
%\begin{equation}\label{eqn:fun4}
%\aligned
%J_\al(u_0)&=J_\la(u_0)-\frac12P_\al(u_0)\\
%&=\frac{1}{2}\int_{\R^2}|\nabla u_0|^2{\rm d}x+\frac{B}{2\pi\al}\frac{2-\al}{4-\al} \int_{\R^2}F(u_0){\rm d}x\\
%&\geq0.
%\endaligned
%\end{equation}

\noindent Next we distinguish two cases:

\medskip

\noindent \emph{Case 1.} $u_0\equiv0$. Recalling Lemma \ref{Lem:compact0}, one has
$$
\frac{1}{2}> c_\al= I_\al(u_n)+o_n(1)=\frac{1}{2}\|u_n\|^2+o_n(1).
 $$
 Then there exists $\epsilon_0>0$ sufficiently small such that for large $n$
\begin{equation}\label{eqn:c-bound4}
\|u_n\|^2<(1-\epsilon_0),
\end{equation}
and then there exists $\theta\in(1,2)$ such that
$$
(1+\epsilon_0)(1-\epsilon_0)\theta<1.
$$
In conclusion, it follows from $(f_1)$--$(f_3)$ and (\ref{eqn:c-bound4}) that for any $\xi>0$, there exists $C_\xi>0$ such that
\begin{equation}\label{eqn:con9}
\aligned
&\int_{\R^2}f(u_n)u_n{\rm d}x\\
&\leq \int_{\R^2}\bigg(\xi u_n^2+C_\xi |u_n|^{p+1}e^{4\pi\|u_n\|^2\frac{|u_n|^2}{\|u_n\|^2}}\bigg){\rm d}x\\
%&\leq \xi \|u_n\|^2+C_\xi C_\e\int_{\R^2}|u_n|^{p+1}e^{4\pi(1+\e)\frac{v_n^2}{(1-\e)^2}}{\rm d}x\\
&\leq \xi \|u_n\|^2+C_\xi \bigg(\int_{\R^2}|u_n|^{\theta'(p+1)}{\rm d}x\bigg)^{\frac{1}{\theta'}}
\bigg(\int_{\R^2}e^{4\pi(1-\e_0)\theta\frac{|u_n|^2}{\|u_n\|^2}}{\rm d}x\bigg)^{\frac{1}{\theta}}\\
&\leq \xi \|u_n\|^2+C_\xi C\bigg(\int_{\R^2}|u_n|^{\theta'(p+1)}{\rm d}x\bigg)^{\frac{1}{\theta'}}
\bigg(\int_{\R^2}e^{4\pi(1+\e_0)(1-\e_0)\theta\frac{|u_n|^2}{\|u_n\|^2}}-1{\rm d}x\bigg)^{\frac{1}{\theta}},
\endaligned
\end{equation}
where $\frac{1}{\theta}+\frac{1}{\theta'}=1$.
%Take $\e$ and $\bar{\theta}$ close sufficiently to $0$ and $1$, respectively, then by (\ref{eqn:keji1}) one has
%$$
%\frac{(1+\e)}{(1-\e)^2}\bar{\theta}\|v_n\|^2<1
%$$
%for $n$ large enough.
Hence, combining (\ref{eqn:fun2-}) and (\ref{eqn:con9}), together with the
arbitrariness of $\xi$, yields
\begin{equation}\label{eqn:con10}
\int_{\R^2}f(u_n)u_n{\rm d}x=o_n(1),
\end{equation}
which implies by (\ref{eqn:fun3+0}) the following
$$
\int_{\R^2}\bigg[\frac{1}{|x|^\alpha}\ast F(u_n)\bigg]f(u_n)u_n{\rm d}x=o_n(1).
$$
From $I'_\al(u_n)u_n=o_n(1)$ we get $u_n\rightarrow0$ in $H_r^1(\R^2)$, as $n\rightarrow\infty$. This is a contradiction with the fact $I_{\alpha}(u_n)\rightarrow c_{\al}$, as $n\rightarrow\infty$.

\medskip

\noindent \emph{Case 2.} $u_0\not\equiv0$. That is, $\|u_0\|>0$. Next we show that
\begin{equation}\label{eqn:claim1}
\|u_n\|^2\rightarrow \|u_0\|^2,\quad \text{ as }\,\,n\rightarrow\infty.
\end{equation}
By Fatou's lemma and Lemma \ref{Lem:compact0} we have
\begin{equation}\label{eqn:claim2+}
\aligned
 I_{\alpha}(u_0)=&\frac{1}{2}\|u_0\|^2+\frac{1}{2\alpha}\bigg(\int_{\R^2}F(u_0){\rm d}x\bigg)^2-\frac{1}{2\al}\int_{\R^2}\bigg[\frac{1}{|x|^\alpha}\ast F(u_0)\bigg]F(u_0){\rm d}x\\
\leq&\liminf_{n\rightarrow\infty}\bigg(\frac{1}{2}\|u_n\|^2+\frac{1}{2\alpha}\bigg(\int_{\R^2}F(u_n){\rm d}x\bigg)^2
-\frac{1}{2\al}\int_{\R^2}\bigg[\frac{1}{|x|^\alpha}\ast F(u_n)\bigg]F(u_n){\rm d}x\bigg)\\
=& \liminf_{n\rightarrow\infty}I_{\al}(u_n)=c_{\al}.
\endaligned
\end{equation}
If $I_{\alpha}(u_0)=c_{\al}$, by (\ref{eqn:claim2+}) we obtain immediately (\ref{eqn:claim1}).
Otherwise, if $I_{\alpha}(u_0)<c_{\al}$, then
\begin{equation}\label{eqn:claim3}
\aligned
\|u_0\|^2+\frac{1}{\alpha}\bigg(\int_{\R^2}F(u_0){\rm d}x\bigg)^2
<2c_{\al}+\frac{1}{\al}\int_{\R^2}\bigg[\frac{1}{|x|^\alpha}\ast F(u_0)\bigg]F(u_0){\rm d}x.
\endaligned
\end{equation}
 In view of the definition of $I_{\alpha}$, we also have
\begin{equation}\label{eqn:claim4}
\aligned
\lim\limits_{n\rightarrow\infty}\bigg(\|u_n\|^2+\frac{1}{\alpha}\bigg(\int_{\R^2}F(u_n){\rm d}x\bigg)^2\bigg)
=2c_{\al}+\frac{1}{\al}\int_{\R^2}\bigg[\frac{1}{|x|^\alpha}\ast F(u_0)\bigg]F(u_0){\rm d}x.
\endaligned
\end{equation}
Take
$$
w_n=\frac{u_n}{\sqrt{\|u_n\|^2+\frac{1}{\alpha}\bigg(\int_{\R^2}F(u_n){\rm d}x\bigg)^2}}
$$
and
$$
w_0=\frac{u_0}{\sqrt{2c_{\al}+\frac{1}{\al}\int_{\R^2}\bigg[\frac{1}{|x|^\alpha}\ast F(u_0)\bigg]F(u_0){\rm d}x}}.
$$
From (\ref{eqn:claim3}) and (\ref{eqn:claim4}) we have $\|w_n\|\leq1$, $w_n\rightharpoonup w_0$ and
$\|w_0\|<1$. It is not difficult to deduce by \eqref{eqn:claim3}
that $\displaystyle\lim_{n\rightarrow\infty}\|u_n\|^2>\|u_0\|^2$ and $A>\|w_0\|^2$. Following Lions \cite{Lions85}, one has
\begin{equation}\label{eqn:claim4+1}
\sup\limits_{n\in\N}\bigg(\int_{\R^2}e^{4\pi r w_n^2}-1\bigg){\rm d}x<\infty
\end{equation}
for all
$$
 r<\bar{r}:=\frac{1}{B-\|w_0\|^2}=2\frac{c_{\al}+\frac{1}{2\al}\int_{\R^2}\bigg[\frac{1}{|x|^\alpha}\ast F(u_0)\bigg]F(u_0){\rm d}x}{\|u_n\|^2-\|u_0\|^2}+o_n(1),
$$
where $\displaystyle B=\lim_{n\rightarrow\infty}\|w_n\|^2$.
By Lemma \ref{Lem:compact0} and (\ref{eqn:fun4}), the Brezis-Lieb lemma yields
$$
\aligned
I_\al(u_n)-I_\al(u_0)=\frac{1}{2}\bigg(\|u_n\|^2-\|u_0\|^2\bigg)+o_n(1)< c_\al<\frac{1}{2}.
\endaligned
$$
Then, recalling (\ref{eqn:claim4}),
we can always choose $s>1$ sufficiently close to $1$ and $\epsilon>0$ small enough such that
$$
\aligned
&s(1+\epsilon) \bigg(\|u_n\|^2+\frac{1}{\alpha}\bigg(\int_{\R^2}F(u_n){\rm d}x\bigg)^2\bigg)\\
\leq& r<\frac{1}{B-\|w_0\|^2}\\
=&2\frac{c_{\al}
+\frac{1}{2\al}\int_{\R^2}\bigg[\frac{1}{|x|^\alpha}\ast F(u_0)\bigg]F(u_0){\rm d}x}{\|u_n\|^2-\|u_0\|^2}+o_n(1)
\endaligned
$$
for some $r$ satisfying (\ref{eqn:claim4+1}).
Based on the above facts,
%from condition ($f_1$), we deduce that
%\begin{equation}\label{eqn:claim5}
%|f(u_n)|^r\leq C|u_n|^r+C |u_n|^{rp}e^{4\pi r|u_n|^2}
%\end{equation}
%for some $C>0$ independently of $n$.
it follows from (\ref{eqn:fun3+f}), (\ref{eqn:claim4+1}) and ($f_1$) that
\begin{equation}\label{eqn:claim5}
\aligned
&\bigg|\int_{\R^2}f(u_n)u_n-f(u_0)u_0{\rm d}x\bigg|\\
=&\bigg|\int_{\R^2}f(u_n)(u_n-u_0)-(f(u_n)-f(u_0))u_0{\rm d}x\bigg|\\
\leq&C\bigg(\int_{\R^2}|u_n|^{ps}e^{4s\pi \|u_n\|^2\frac{|u_n|^2}{\|u_n\|^2}}dx\bigg)^{\frac{1}{s}}\bigg(\int_{\R^2}|u_n-u_0|^{\frac{s}{s-1}}dx\bigg)^{\frac{s-1}{s}}
\\&+C\int_{\R^2}|u_n(u_n-u_0)|{\rm d}x
+\int_{\R^2}\big|(f(u_n)-f(u_0))u_0\big|{\rm d}x\rightarrow0,\quad \text{ as }\,\,n\rightarrow\infty.
\endaligned
\end{equation}
On the other hand, from (\ref{eqn:fun3+0}) and (\ref{eqn:fun3}) we deduce that
\begin{equation}\label{eqn:claim6}
\aligned
&\bigg|\int_{\R^2}\bigg[\frac{1}{|x|^\al}\ast F(u_n)\bigg]f(u_n)u_n{\rm d}x-\int_{\R^2}\bigg[\frac{1}{|x|^\al}\ast F(u_0)\bigg]f(u_0)u_0{\rm d}x\bigg|\\
=&\bigg|\int_{\R^2}\bigg[\frac{1}{|x|^\al}\ast F(u_n)\bigg]\big(f(u_n)u_n-f(u_0)u_0\big){\rm d}x\bigg|\\
&+\bigg|\int_{\R^2}\bigg[\frac{1}{|x|^\al}\ast \big(F(u_n)- F(u_0)\big)\bigg]f(u_0)u_0{\rm d}x\bigg|\\
\leq&C\bigg(\int_{\R^2}|u_n|^{ps}e^{4s\pi \|u_n\|^2\frac{|u_n|^2}{\|u_n\|^2}}dx\bigg)^{\frac{1}{s}}\bigg(\int_{\R^2}|u_n-u_0|^{\frac{s}{s-1}}dx\bigg)^{\frac{s-1}{s}}
\\&+C\int_{\R^2}|u_n(u_n-u_0)|{\rm d}x
+\int_{\R^2}\big|(f(u_n)-f(u_0))u_0\big|{\rm d}x\rightarrow 0,\quad \text{ as }\,\,n\rightarrow\infty.
\endaligned
\end{equation}
Combining (\ref{eqn:claim5}) and (\ref{eqn:claim6}) we
obtain $u_n\rightarrow u_0$ in $H_r^1(\R^2)$. Then $I(u_0)<c_\al$ is not true.
Hence, $I'_\al(u_0)=0$ and $I_\al(u_0)=c_\al$.
\end{proof}

\section{Proof of Theorem 1.1}
\noindent By virtue of Lemma \ref{Lem:compact}, we have that $u_\al\in H_r^1(\R^2)$ is a
critical point of $I_\al$ with $I_\al(u_\al)=c_\al$. Recalling Remark \ref{rem:5.3},
we have $a<I_\al(u_\al)<b$ with $a,b>0$ independent of $\al$. Similar arguments as in
Lemma \ref{Lem:c-bounded} yield $u_\al$ bounded in $H_r^1(\R^2)$ uniformly in $\al>0$.
In order to study the limit properties of $u_\al$ as $\al\rightarrow0^+$, we are going
to establish some estimates for $u_\al$.

\noindent We may assume as  $\alpha\rightarrow0^+$, up to a subsequence, the following:
\begin{equation}\label{eqn:limit}
\aligned
& u_\alpha\rightharpoonup u_0 \quad\text{in}\,\, H_r^1(\R^2),\\
&u_\alpha\rightarrow u_0 \quad\,\,a.e.\,\,\text{in}\,\, \R^2,\\
&u_\alpha\rightarrow u_0\quad\text{in}\,\, L^s(\R^2)\quad \text{for}\,\,s\in(2,+\infty).
\endaligned
\end{equation}
\begin{lemma}\label{Lem:jianjin+}
For $\omega>1$, sufficiently close to $1$, there exists $C>0$ independent of $\al\in\bigg(0,\frac{4(\omega-1)}{3\omega}\bigg)$ such that
$$
\left|\int_{|x-y|\leq 1}\frac{F(u_\alpha(y))}{|x-y|^{\frac{4(\omega-1)}{3\omega}}}{\rm d}y\right|\leq C,
$$
and as $|x|\rightarrow\infty$ and $\al\rightarrow0^+$,
$$
 \int_{|x-y|\leq 1}\frac{F(u_\alpha(y))}{|x-y|^{\frac{4(\omega-1)}{3\omega}}}{\rm d}y\rightarrow 0.
$$
\end{lemma}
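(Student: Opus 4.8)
The plan is to deduce both statements from H\"older's inequality, combining the uniform $L^\kappa$ bound on $F(u_\al)$ provided by Lemma \ref{Lem:compact} with the pointwise decay of radial $H^1$ functions.

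Set $\nu=\nu(\omega):=\frac{4(\omega-1)}{3\omega}\in(0,1)$, so that $\nu\to0^+$ as $\omega\to1^+$ and $0<\al<\nu$ throughout. Since $u_\al$ is a critical point of $I_\al$ at level $c_\al$, the constant sequence $\{u_\al\}$ is a bounded Cerami sequence for $I_\al$ at level $c_\al$; recall moreover that $\sup_{\al\in(0,1)}\|u_\al\|<\infty$, that $\sup_{\al\in(0,1)}c_\al<\frac{1}{2}$ by Lemma \ref{Lem:upperbound}, and that $a<c_\al$ with $a$ as in Remark \ref{rem:5.3}. Inspecting the proof of Lemma \ref{Lem:compact}, the constant obtained there depends only on $\sup_\al\|u_\al\|$ and on the margin $1-2\sup_\al c_\al>0$ (which controls the exponent in the Trudinger--Moser step, keeping it strictly below $4\pi$ uniformly in $\al$); hence there exists $C>0$, independent of $\al$, with $\int_{\R^2}F(u_\al)^{\kappa}\,{\rm d}x\le C$ for every $\kappa\in\big[1,\frac{1}{2a}\big)$.

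For the first estimate, choose $\kappa$ with $\frac{3\omega}{\omega+2}<\kappa<\frac{1}{2a}$; this interval is nonempty once $\omega$ is close enough to $1$, since $\frac{3\omega}{\omega+2}\to1<\frac{1}{2a}$ as $\omega\to1^+$. A direct computation shows that $\kappa>\frac{3\omega}{\omega+2}$ is equivalent to $\nu\kappa'<2$ with $\kappa'=\frac{\kappa}{\kappa-1}$, so that $C_0:=\int_{|z|\le1}|z|^{-\nu\kappa'}\,{\rm d}z<\infty$; H\"older's inequality on the ball $B_1(x)$ then yields, uniformly in $x\in\R^2$ and $\al\in(0,\nu)$,
$$
\int_{|x-y|\le1}\frac{F(u_\al(y))}{|x-y|^{\nu}}\,{\rm d}y\le C_0^{1/\kappa'}\Big(\int_{\R^2}F(u_\al)^{\kappa}\,{\rm d}x\Big)^{1/\kappa}\le C .
$$

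For the decay, I would use that $u_\al\in H_r^1(\R^2)$: by the standard radial estimate, $|u_\al(y)|\le C\|u_\al\|\,|y|^{-1/2}$ for $|y|\ge1$, uniformly in $\al$, so for $|x|\ge2$ and $y\in B_1(x)$ one has $|u_\al(y)|\le C(|x|-1)^{-1/2}$, which is $\le\bar t$ once $|x|$ is large. Then \eqref{eqn:f1} gives $F(u_\al(y))\le C|u_\al(y)|^2\le C(|x|-1)^{-1}$ on $B_1(x)$, so that
$$
\int_{|x-y|\le1}\frac{F(u_\al(y))}{|x-y|^{\nu}}\,{\rm d}y\le\frac{C}{|x|-1}\int_{|z|\le1}|z|^{-\nu}\,{\rm d}z\;\longrightarrow\;0\qquad\text{as }|x|\to\infty,
$$
uniformly in $\al\in(0,\nu)$, which in particular gives the stated joint limit as $|x|\to\infty$ and $\al\to0^+$. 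The only step needing genuine care is the uniformity in $\al$ of the $L^\kappa$ bound on $F(u_\al)$; granting that, the rest is a routine application of H\"older's inequality and the radial lemma.
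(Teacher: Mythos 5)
Your proof is correct, and it splits into two halves of different character relative to the paper. For the uniform bound, you and the paper do essentially the same thing: both rest on the $\alpha$-uniform estimate $\int_{\R^2}F(u_\al)^{\kappa}\,{\rm d}x\le C$ (the paper's \eqref{eqn:al1}, obtained by rerunning the argument of Lemma \ref{Lem:compact} with the observation that $\sup_\al\|v_\al\|<1$ thanks to $\sup_\al c_\al<\tfrac12$), followed by H\"older on $B_1(x)$ with the conjugate exponent chosen so that the kernel $|x-y|^{-\nu\kappa'}$ stays locally integrable; your condition $\kappa>\frac{3\omega}{\omega+2}\iff\nu\kappa'<2$ is the same arithmetic the paper performs with $\kappa=\omega$, where $\nu\cdot\frac{\omega}{\omega-1}=\frac43<2$. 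You are right that the only delicate point is the $\al$-independence of that $L^\kappa$ bound, and your justification (the Trudinger--Moser exponent is controlled by $2c_\al+\sigma<1$ uniformly in $\al$, via Lemma \ref{Lem:upperbound}) is exactly the paper's. For the decay as $|x|\to\infty$, however, you take a genuinely different and arguably cleaner route: the paper keeps the H\"older splitting, reduces $(\int_{B_1(x)}F(u_\al)^\omega)^{1/\omega}$ to local $L^q$ norms of $u_\al$ via the pointwise bound $u_\al\le t_\e+\frac{v_\al}{1-\e}$ plus Trudinger--Moser, and then invokes the strong convergence $u_\al\to u_0$ in $L^s(\R^2)$, $s>2$, from \eqref{eqn:limit} to make those local norms vanish at infinity; you instead use the Strauss radial estimate $|u_\al(y)|\le C\|u_\al\|\,|y|^{-1/2}$ (the same radial lemma the paper uses later in Lemma \ref{Lem:R2}) so that on $B_1(x)$ with $|x|$ large only the quadratic regime $F(t)\le Ct^2$ of \eqref{eqn:f1} is active, giving an explicit rate $O(|x|^{-1})$ that is uniform in $\al$ rather than a joint limit along a subsequence. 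Your version yields a quantitatively stronger and more self-contained conclusion (no appeal to the weak-limit extraction \eqref{eqn:limit}), at the modest cost of invoking positivity/nonnegativity of $u_\al$ and the threshold $\bar t$ in \eqref{eqn:f1}, both of which are available. No gaps.
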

\begin{proof}
Arguing as in Lemma \ref{Lem:compact}, by taking $v_\al:=H(u_\al)$ and by \eqref{eqn:keji1},
 we deduce
 $$
 \sup_{\al\in(0,\frac{4(\omega-1)}{3\omega})}\|v_\al\|<1
 $$
 and there exists $C>0$
 independent of $\al>0$ such that
 \begin{equation}\label{eqn:al1}
\int_{\R^2}F(u_\al)^\omega{\rm d}x<C.
\end{equation}
Moreover, we know that for any $\e>0$, there exists $t_\e>0$ such that
$$
 u_\al(x)\leq t_\e+\frac{v_\al(x)}{1-\e}\quad\text{for\,any}\,\,x\in\R^2,
 $$
 which implies by Young's inequality that there exists $C_\e>0$ such that
 $$
 u_\al^2\leq C_\e t_\e^2+(1+\e)\frac{v_\al^2}{(1-\e)^2}.
 $$
From $(f_1)$--$(f_3)$ and using the H\"{o}lder inequality, one has
$$
\aligned
&\int_{|x-y|\leq 1}\frac{F(u_\alpha(y))}{|x-y|^{\frac{4(\omega-1)}{3\omega}}}{\rm d}y\\
&\leq\bigg(\int_{|x-y|\leq 1}\frac{1}{|x-y|^{\frac{4}{3}}}{\rm d}y\bigg)^{\frac{\omega}{\omega-1}}\cdot
\bigg(\int_{|x-y|\leq 1}|F(u_\alpha)|^\omega {\rm d}y\bigg)^{\frac{1}{\omega}}\\
&\leq C \bigg(\int_{|x-y|\leq 1}|u_\alpha|^{2\omega}+|u_\alpha|^{p\omega}e^{4\pi\omega(t_\e+\frac{v_n}{1-\e})^2} {\rm d}y\bigg)^{\frac{1}{\omega}}\\
&\leq C C_\e\bigg(\int_{|x-y|\leq 1}|u_\alpha|^{2\omega}+|u_\alpha|^{p\omega}e^{4\pi\omega(1+\e)\frac{v_n^2}{(1-\e)^2}} {\rm d}y\bigg)^{\frac{1}{\omega}}\\
&\leq C \bigg(\int_{|x-y|\leq 1}|u_\alpha|^{2\omega}{\rm d}y+
\big(\int_{|x-y|\leq 1}|u_\alpha|^{p\omega\zeta}{\rm d}y\big)^{1/\zeta}\cdot\big(\int_{\R^2}e^{4\pi\zeta'
\frac{(1+\e)}{(1-\e)^2}\omega\|v_n\|^2\frac{v_n^2}{\|v_n\|^2}}-1{\rm d}x )^{1/\zeta'}\bigg)^{\frac{1}{\omega}}\\
&\leq C \bigg(\int_{|x-y|\leq 1}|u_\alpha|^{2\omega}{\rm d}y+
C\big(\int_{|x-y|\leq 1}|u_\alpha|^{p\omega\zeta}{\rm d}y\big)^{1/\zeta}\bigg)^{\frac{1}{\omega}},
\endaligned
$$
where $\zeta>1$ and $\zeta'=\frac{\zeta}{\zeta-1}$ and in the last inequality
we let $\zeta',\omega$ sufficiently close to $1$.
Thanks to (\ref{eqn:limit}), we obtain the desired result.
\end{proof}

\noindent Next we establish the exponential decay of $u_\al$ at infinity uniformly with respect to $\al$.

\begin{lemma}\label{Lem:R2}
There exist $R,M>0$ (independent of $\al$) such that
$$
u_\alpha(x)\leq M\exp\bigg(-\frac{1}{2}|x|\bigg) \quad\text{for} \,\,|x|\geq R.
$$
\end{lemma}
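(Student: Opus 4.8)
The plan is to derive from \eqref{eqn:2-spos} a pointwise differential inequality of the form $-\DD u_\al+\tfrac12 u_\al\le 0$ outside a ball $B_R$ whose radius $R$ is independent of $\al$, and then to compare $u_\al$ with the exponential barrier $\psi(x)=Me^{-|x|/2}$. First I would record the uniform decay coming from radial symmetry: since $\{u_\al\}$ is bounded in $H^1_r(\R^2)$ uniformly in $\al$ (as noted just before this lemma) and $u_\al\ge 0$ (because $f$ vanishes on $(-\iy,0]$), the classical pointwise estimate for radial functions in $H^1(\R^2)$ gives a constant $C_0>0$, independent of $\al$, with $0\le u_\al(x)\le C_0|x|^{-1/2}$ for $|x|\ge 1$; in particular $u_\al(x)\to 0$ as $|x|\to\iy$, uniformly in $\al$.

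Next I would bound the Coulomb coefficient from above, uniformly in $\al$ and $x$. Fixing $\omega>1$ close to $1$, setting $\beta:=\frac{4(\omega-1)}{3\omega}\in(0,1)$ and restricting $\al\in(0,\beta/2)$, Lemma \ref{Lem:G} gives $G_\al(z)\le C_\beta|z|^{-\beta}$ for all $z\neq0$, with $C_\beta$ independent of such $\al$; splitting the convolution over $\{|x-y|\le1\}$ and $\{|x-y|\ge1\}$ and using $F\ge0$,
$$\big[G_\al(x)\ast F(u_\al)\big]\le C_\beta\int_{|x-y|\le1}\frac{F(u_\al(y))}{|x-y|^{\beta}}\,{\rm d}y+C_\beta\int_{|x-y|\ge1}F(u_\al(y))\,{\rm d}y\le C_1,$$
where the first integral is uniformly bounded by Lemma \ref{Lem:jianjin+} and the second by $\|F(u_\al)\|_{1}$, which is uniformly bounded thanks to $(f_1)$, the uniform $H^1$ bound and the Trudinger--Moser inequality; the constant $C_1$ depends on neither $\al$ nor $x$. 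Now by $(f_1)$ there is $\dd>0$ with $0\le f(t)\le\frac{1}{2C_1}t$ for $0\le t\le\dd$, and then, by the uniform decay, an $R\ge1$ independent of $\al$ with $u_\al(x)\le\dd$ for $|x|\ge R$. For such $x$ one gets $\big[G_\al(x)\ast F(u_\al)\big]f(u_\al(x))\le\tfrac12 u_\al(x)$: either the bracket is $\le0$, whence the left side is $\le0$ since $f\ge0,\ u_\al\ge0$, or $0<[\,\cdot\,]\le C_1$ and $f(u_\al(x))\le\frac{1}{2C_1}u_\al(x)$. Hence \eqref{eqn:2-spos} yields $-\DD u_\al+\tfrac12 u_\al=\big[G_\al\ast F(u_\al)\big]f(u_\al)-\tfrac12 u_\al\le 0$ in $\{|x|>R\}$, classically by elliptic regularity (equivalently, weakly, by testing against nonnegative functions in $H^1_0(\{|x|>R\})$).

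Finally I would run the comparison argument. The radial function $\psi(x)=Me^{-|x|/2}$ satisfies $-\DD\psi+\tfrac12\psi=Me^{-|x|/2}\big(\tfrac14+\tfrac{1}{2|x|}\big)>0$ in $\R^2\setminus\{0\}$, so it is a supersolution; choosing $M:=C_0R^{-1/2}e^{R/2}$, which is independent of $\al$, the uniform decay gives $u_\al\le Me^{-R/2}=\psi$ on $\{|x|=R\}$, while $u_\al-\psi\to0$ at infinity. Then $w:=u_\al-\psi$ satisfies $-\DD w+\tfrac12 w\le0$ in $\{|x|>R\}$, $w\le0$ on $\{|x|=R\}$ and $w\to0$ at infinity, so a point where $w$ attains a positive maximum over $\{|x|\ge R\}$ would have to be interior, contradicting the inequality there; hence $w\le0$, i.e. $u_\al(x)\le Me^{-|x|/2}$ for $|x|\ge R$, as claimed (the restriction $\al\in(0,\beta/2)$ being harmless, since only small $\al$ is needed in the sequel).

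I expect the main obstacle to be the uniformity in $\al$ in the middle step: it is essential that the exponent $\beta$ in Lemma \ref{Lem:G} can be fixed independently of $\al$ over a whole interval of parameters and that Lemma \ref{Lem:jianjin+} controls the resulting Riesz-type potential uniformly — the splitting that discards the sign-favourable far part of $G_\al$ is precisely what neutralises the sign-changing logarithmic kernel here. A secondary and routine point is the use of the maximum principle on the unbounded exterior domain, which is legitimate because $u_\al-\psi$ vanishes at infinity.
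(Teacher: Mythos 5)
Your proof is correct and follows essentially the same route as the paper: the radial lemma for the uniform decay $u_\alpha(x)\le C|x|^{-1/2}$, Lemma \ref{Lem:G} together with Lemma \ref{Lem:jianjin+} to control the convolution term uniformly in $\alpha$, a differential inequality $-\Delta u_\alpha+c\,u_\alpha\le 0$ on an exterior domain independent of $\alpha$, and comparison with the barrier $Me^{-|x|/2}$. The only (harmless) variation is that you keep a uniform constant bound $C_1$ on the whole convolution and absorb it into the smallness of $f(t)/t$ near $0$, whereas the paper discards the nonpositive far part of $G_\alpha$, makes the remaining coefficient itself small via the $|x|\to\infty$ limit in Lemma \ref{Lem:jianjin+}, and uses $f(u_\alpha)\le\frac34 u_\alpha$.
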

\begin{proof}
Since $u_\alpha$ is a positive function of equation (\ref{eqn:2-spos}) and by Lemma \ref{Lem:G} we obtain
\begin{equation}\label{eqn:re1}
\aligned
    -\triangle {u_\alpha}+ u_\alpha
     &\leq \int_{|x-y|\leq 1}\frac{|x-y|^{-\alpha}-1}{\alpha}F(u_\alpha(y)){\rm d}y f(u_\alpha){\rm d}x\\
     &\leq C\int_{|x-y|\leq 1}\frac{F(u_\alpha(y))}{|x-y|^{\frac{4(\omega-1)}{3\omega}}}{\rm d}yf(u_\alpha){\rm d}x,
\endaligned
\end{equation}
where $\al\in\bigg(0,\frac{4(\omega-1)}{3\omega}\bigg)$.
In view of Lemma \ref{Lem:jianjin+},
%one has
%$$
%\aligned
%\int_{|x-y|\leq 1}\frac{F(u_\al(y))}{|x-y|^{\frac{4(\omega-1)}{3\omega}}}{\rm d}y\rightarrow0,\quad\text{as}\,\,|x|\rightarrow+\infty,
%\endaligned
%$$
%uniformly for $\al\in(0,\frac{4(\omega-1)}{3\omega})$ sufficiently small,
%which implies that
there exist $R_1>0$ and $\al^*\in\bigg(0,\frac{4(\omega-1)}{3\omega}\bigg)$
such that
\begin{equation}\label{eqn:re2}
\int_{|x-y|\leq 1}\frac{F(u_\al(y))}{|x-y|^{\frac{4(\omega-1)}{3\omega}}}{\rm d}y\leq \frac{1}{C}
\end{equation}
 for $|x|\geq R_1$ and $\al\in(0,\al^*)$. By recalling the radial Lemma A.IV in \cite{Berestycki83}, there exists $C>0$ independent of $\alpha$ such that
$$
|u_\alpha(x)|\leq C|x|^{-\frac12}\|u_\alpha\|\leq C|x|^{-\frac12},
$$
which implies
$$
\lim\limits_{|x|\rightarrow\infty}|u_\alpha(x)|=0\quad\text{uniformly \,for}\,\, \alpha\in(0,\al^*).
$$
Thus, by assumption $(f_1)$, we deduce that there exists $R_2>0$ such that
\begin{equation}\label{eqn:re3}
\aligned
f(u_\al)\leq \frac{3}{4}u_\al,\quad |x|\geq R_2.
\endaligned
\end{equation}
Combining (\ref{eqn:re1})--(\ref{eqn:re3}), let $R=\max\{R_1,R_2\}$ to get for $\al\in(0,\al^*)$
\begin{equation}\label{eqn:re4}
\aligned
    -\triangle {u_\alpha}+ \frac{1}{4}u_\alpha\leq0,\quad |x|\geq R.
\endaligned
\end{equation}
From (\ref{eqn:re4}) and the comparison principle, there exists a constant
$M\geq \frac{C}{R}e^{R/2}$ such that
$$
u_\alpha(x)\leq M \exp\bigg(-\frac{1}{2}|x|\bigg) \quad\text{for}\,\, |x|\geq R.
$$
Here $R,M$ are independent of $\al$.
\end{proof}

\noindent \emph{Proof of Theorem 1.1.} We are now in the position to carry out the proof of our main result which we divide into two steps:

\medskip

\noindent \emph{Step 1.} Let us show that $u_0\in H_r^{1}(\R^2)$ satisfies $I'(u_0)=0$.\

\noindent For any $\varphi\in C_0^\infty(\R^2)$, we have
\begin{equation}\label{eqn:Th1}
\aligned
I'_\alpha(u_\alpha)\varphi
=&\int_{\R^2}\nabla u_\al\nabla\varphi {\rm d}x+\int_{\R^2} u_\alpha\varphi{\rm d}x\\
&-\int_{\R^2}\int_{\R^2}\frac{|x-y|^{-\alpha}-1}{\alpha}F(u_\alpha(y)){\rm d}y f(u_\alpha)
\varphi {\rm d}x.
\endaligned
\end{equation}
Recalling (\ref{eqn:fun3+f}), we have
\begin{equation}\label{eqn:Th1+}
\aligned
\int_{\R^2}f(u_\alpha)\varphi {\rm d}x\rightarrow\int_{\R^2}f(u_0)\varphi {\rm d}x,\quad\text{as}\,\,\al\rightarrow0^+.
\endaligned
\end{equation}
% A direct computation yields
%\begin{equation}\label{eqn:Th2}
%\aligned
%&\int_{\R^2}\frac{1}{|x-y|}u_\alpha^2(y){\rm d}y\\
%&\leq \int_{|x-y|\leq1}\frac{1}{|x-y|}u_\alpha^2(y){\rm d}y+\int_{|x-y|\geq1}\frac{1}{|x-y|}u_\alpha^2(y){\rm d}y\\
%&\leq \int_{|y|\leq1}\frac{1}{|y|}u_\alpha^2(x-y){\rm d}y+\int_{|y|\geq1}\frac{1}{|y|}u_\alpha^2(x-y){\rm d}y\\
%&\leq \bigg(\int_{|y|\leq1}\frac{1}{|y|^{3/2}}{\rm d}y\bigg)^{2/3}\bigg(\int_{\R^2}u_\alpha^6{\rm d}y\bigg)^{1/3}
%+\bigg(\int_{|y|\geq1}\frac{1}{|y|^{3}}{\rm d}y\bigg)^{1/3}\bigg(\int_{\R^2}u_\alpha^3{\rm d}y\bigg)^{2/3}\\
%&\leq C.
%\endaligned
%\end{equation}
Then it follows from Lemma \ref{Lem:G} that for any fixed $\varphi\in C_0^\infty(\R^2)$, we have for $\al\in(0,\al^*)$
\begin{equation}\label{eqn:Th2-}
\aligned
&\bigg|\frac{|x-y|^{-\alpha}-1}{\alpha}1_{|x-y|\leq1}F(u_\alpha(y))f(u_\alpha(x))\varphi(x)\bigg|\\
&\leq
\bigg|\frac{1}{|x-y|^{\frac{4(\omega-1)}{3\omega}}}1_{|x-y|\leq1}F(u_\alpha(y))f(u_\alpha(x))\varphi(x)\bigg|:=h_\al(x,y),
\endaligned
\end{equation}
which, together with Lemma \ref{Lem:jianjin+} and (\ref{eqn:Th1+}), yields
$$
\aligned
&\bigg|\int_{\R^2}\int_{\R^2}h_\al(x,y)-\frac{1}{|x-y|^{\frac{4(\omega-1)}{3\omega}}}1_{|x-y|\leq1}F(u_0(y))f(u_0(x))\varphi(x){\rm d}x{\rm d}y\bigg|\\
\leq&\bigg|\int_{\R^2}\int_{\R^2}\frac{1}{|x-y|^{\frac{4(\omega-1)}{3\omega}}}1_{|x-y|\leq1}\big(F(u_\al(y))-F(u_0(y))\big)
f(u_\al(x))\varphi(x){\rm d}y{\rm d}x\bigg|\\
&+\bigg|\int_{\R^2}\int_{\R^2}\frac{1}{|x-y|^{\frac{4(\omega-1)}{3\omega}}}1_{|x-y|\leq1}F(u_0(y)){\rm d}y\big( f(u_\al(x))-f(u_0(x))\big)\varphi(x){\rm d}x\bigg|\\
=:& I_1+o_\al(1).
\endaligned
$$
Observe by Lemma \ref{Lem:jianjin+} that there exists $C>0$ independent of $\al$ such that
$$
\aligned
I_1&\leq C\int_{\R^2}\int_{\R^2}1_{|x-y|\leq1}f(u_\al(x))\varphi(x){\rm d}y{\rm d}x\\
&\leq C\int_{\R^2}\int_{\R^2}1_{\Omega}{\rm d}y f(u_0(x))\varphi(x){\rm d}x+o_\al(1),
\endaligned
$$
where $|\Omega|$ is large enough. Thus, the Lebesgue dominated convergence theorem implies $I_1\rightarrow0$, as $\al\rightarrow0^+$.
Furthermore, since $\{h_\al\}$ has a strongly convergent subsequence in $L^1(\R^2)$,
we use the Lebesgue dominated convergence theorem in (\ref{eqn:Th2-}) to get
\begin{equation}\label{eqn:Th2--}
\aligned
&\int\int_{|x-y|\leq1}\frac{|x-y|^{-\alpha}-1}{\alpha}F(u_\alpha(y)){\rm d}y f(u_\alpha(x))\varphi(x) {\rm d}x\\
&\rightarrow
-\int\int_{|x-y|\leq1}\ln(|x-y|)F(u_0(y)){\rm d}yf(u_0(x))\varphi(x) {\rm d}x.
\endaligned
\end{equation}
When $|x-y|\geq1$, there exists $\tau=\tau(|x-y|)\in(0,1)$ such that
\begin{equation}\label{eqn:Th3}
0\geq G_{\alpha}(x-y)=\frac{|x-y|^{-\alpha}-1}{\alpha}=-|x-y|^{-\tau\alpha}\ln|x-y|,
\end{equation}
where $\tau$ depends on $|x-y|$.
Since $\varphi $ has a compact support, from Lemma \ref{Lem:R2} and $(f_1)$ we have
\begin{equation}\label{eqn:Th4}
\aligned
&\bigg|\frac{|x-y|^{-\alpha}-1}{\alpha}\cdot1_{|x-y|\geq1}F(u_\alpha(y))f(u_\alpha(x))\varphi(x)\bigg|\\
&=\bigg||x-y|^{-\tau\alpha}\ln|x-y|\cdot1_{|x-y|\geq1}F(u_\alpha(y))f(u_\alpha(x))\varphi(x)\bigg| \\
&\leq\bigg|(|x|+|y|)F(u_\alpha(y))f(u_\alpha(x))\varphi(x)\bigg|\\
&\leq\bigg|C\bigg(F(u_\alpha(y))+M|y| e^{-\frac{1}{2}|y|}\bigg)f(u_\alpha(x))\varphi(x)\bigg|\\
 &=:\bar{h}_\alpha(x,y).
\endaligned
\end{equation}
Using Lemma \ref{Lem:R2} and Lemma \ref{Lem:compact0}, we have that $\{\bar{h}_\al\}$ has
a strongly convergent subsequence in $L^1(\R^2\times\R^2)$.
Combining (\ref{eqn:Th3}) with (\ref{eqn:Th4}), similarly to \eqref{eqn:Th2-}, by the Lebesgue dominated convergence theorem, one has
\begin{equation}\label{eqn:Th5}
\aligned
&\int\int_{|x-y|\geq1}\frac{|x-y|^{-\alpha}-1}{\alpha}F(u_\alpha(y)){\rm d}y f(u_\alpha(x))\varphi(x) {\rm d}x\\
&\rightarrow
-\int\int_{|x-y|\geq1}\ln|x-y|F(u_0(y)){\rm d}yf(u_0(x))\varphi(x) {\rm d}x.
\endaligned
\end{equation}
Fatou's lemma yields
\begin{equation}\label{eqn:Th6}
\aligned
&\bigg|\int_{\R^2}\int_{\R^2}\ln|x-y|F(u_0(y)){\rm d}y F(u_0(x)){\rm d}x\bigg|\\
&\leq \liminf\limits_{\alpha\rightarrow0}
\bigg(\int\int_{|x-y|\leq1}G_{\alpha}(x-y)F(u_\al(y)){\rm d}y F(u_\al(x)) {\rm d}x\\
&-\int\int_{|x-y|\geq1} G_{\alpha}(x-y)F(u_\al(y)){\rm d}y F(u_\al(x)) {\rm d}x\bigg).
\endaligned
\end{equation}
By the Hardy-Littlewood-Sobolev inequality, (\ref{eqn:al1}) and  Lemma \ref{Lem:G}, there exists $C_\omega>0$ such that
\begin{equation}\label{eqn:Th7}
\aligned
&\int\int_{|x-y|\leq1}G_{\alpha}(x-y)F(u_\al(y)){\rm d}y F(u_\al(x)) {\rm d}x\\
&\leq\int_{\R^2}\int_{\R^2}\frac{F(u_\al(y))}{|x-y|^{\frac{4(\omega-1)}{\omega}}}{\rm d}y F(u_\al(x)) {\rm d}x\\
&\leq C_\omega\bigg(\int_{\R^2}F^\omega(u_\al){\rm d}x\bigg)^2\leq C
\endaligned
\end{equation}
uniformly for $\alpha\in \left(0, \frac{4(\omega-1)}{3\omega}\right)$ sufficiently small, where $\omega$ as in Lemma \ref{Lem:jianjin+}.
%Recalling (\ref{eqn:re0}), by conditions ($f_1$)-($f_2$),
% we deduce that there exists $C>0$ independent of $\al$ such that
%$$
%\int_{\R^2}F^\omega(u_\al){\rm d}x<C.
%$$
By Remark \ref{rem:5.3} and (\ref{eqn:Th7}), we also have
\begin{equation}\label{eqn:Th8}
\aligned
&\int\int_{|x-y|\geq1} G_{\alpha}(x-y)F(u_\al(y)){\rm d}y F(u_\al(x)) {\rm d}x\\
&\leq I_\al(u_\alpha)+\int\int_{|x-y|\leq1} G_{\alpha}(x-y)F(u_\al(y)){\rm d}y F(u_\al(x)) {\rm d}x
-\frac{1}{2}\|u_\alpha\|^{2}\\
&< +\infty
\endaligned
\end{equation}
uniformly for $\alpha$ small enough. Joining (\ref{eqn:Th6}), (\ref{eqn:Th7}) and (\ref{eqn:Th8}) gives the following
\begin{equation}\label{eqn:Th9}
\bigg|\int_{\R^2}\int_{\R^2}\ln|x-y|F(u_0(y)){\rm d}y F(u_0(x)){\rm d}x\bigg|<+\infty.
\end{equation}
Based on (\ref{eqn:Th2--}), (\ref{eqn:Th5}) and (\ref{eqn:Th9}), by taking the limit in (\ref{eqn:Th1}),
we have $I'(u_0)=0$ with $I(u_0)<+\infty$, that is, $u_0\in H_r^{1}(\R^2)$ solves equation (\ref{eqn:log-choquard0}).

\medskip

\noindent \emph{Step 2.}
It remains to show that $u_0\neq0$ and that $u_\alpha\rightarrow u_0$ in $H_r^{1}(\R^2)$. Assume by contradiction that $u_\alpha\rightharpoonup 0$ in $H_r^{1}(\R^2)$,
as well as $u_\alpha\rightarrow0$ in $L^t(\R^2)$ for $t\in(2,+\infty)$. Similarly to (\ref{eqn:con10}), we obtain
$\int_{\R^2}f(u_\alpha)u_\alpha {\rm d}x=o_\al(1)$.
So, by Lemma \ref{Lem:G}, Lemma \ref{Lem:jianjin+}, we have
$$
\aligned
I'_\alpha(u_\alpha)u_\alpha&=\|u_\alpha\|^{2}
-\int_{\R^2}\int_{\R^2}G_{\alpha}(x-y)F(u_\alpha(y))f(u_\alpha(x))u_\alpha(x){\rm d}x{\rm d}y\\
&\geq \|u_\alpha\|^{2}-\int\int_{|x-y|\leq1}G_{\alpha}(x-y)F(u_\alpha(y))f(u_\alpha(x))u_\alpha(x){\rm d}x{\rm d}y\\
&\geq \|u_\alpha\|^{2}-\int\int_{|x-y|\leq1}\frac{1}{|x-y|^{\frac{4(\omega-1)}{\omega}}}F(u_\alpha(y))f(u_\alpha(x))u_\alpha(x){\rm d}x{\rm d}y\\
&\geq \|u_\alpha\|^{2}-C\int_{\R^2}f(u_\alpha(x))u_\alpha(x){\rm d}x,
\endaligned
$$
and thus $u_\alpha\rightarrow0$ in $H_r^{1}(\R^2)$, as $\al\rightarrow0^+$. Then, according to Remark \ref{rem:5.3}, (\ref{eqn:Th4}), Lemma \ref{Lem:R2} and Lemma \ref{Lem:compact0}, we have
$$
\aligned
a&\leq I_\al(u_\alpha)\\
&=\frac{1}{2}\|u_\alpha\|^2-\frac{1}{2}\int_{\R^2}
\int_{\R^2}G_{\alpha}(x-y)F(u_\al(y)) F(u_\al(x)){\rm d}x{\rm d}y\\
&\leq-\frac{1}{2}\int\int_{|x-y|\geq1}G_{\alpha}(x-y)F(u_\al(y)) F(u_\al(x)){\rm d}x{\rm d}y+o_\al(1)\\
&\leq C\int_{\R^2}|x|F(u_\al(x)){\rm d}x\int_{\R^2}F(u_\al(y)){\rm d}y+o_\al(1)\\
&\leq C\bigg(\int_{|x|\leq R}|x|F(u_\al(x)){\rm d}x+\int_{|x|\geq R}|x|F(u_\al(x)){\rm d}x\bigg)+o_\al(1)\\
&\leq C\bigg(\int_{|x|\leq R}F(u_\al(x)){\rm d}x+\int_{|x|\geq R}\frac{C|x|}{Me^{\frac{|x|}{2}}}{\rm d}x\bigg)^2+o_\al(1)\\
&=o_\al(1),
\endaligned
$$
which yields a contradiction. So, $u_0\not=0$.
Furthermore, similarly to (\ref{eqn:Th2-}), (\ref{eqn:Th5}),
by Lemma \ref{Lem:R2} and the Lebesgue dominated convergence theorem, we have
\begin{equation}\label{eqn:Th10}
\aligned
&\int_{\R^2}\int_{\R^2}\frac{|x-y|^{-\alpha}-1}{\alpha}F(u_\al(y))f(u_\al(x))u_\al(x) {\rm d}y{\rm d}x\\
&\rightarrow
-\int_{\R^2}\int_{\R^2}\ln|x-y|F(u_0(y))f(u_0(x))u_0(x) {\rm d}x{\rm d}y,
\endaligned
\end{equation}
from which we conclude that $u_\alpha\rightarrow u_0$ in $H_r^1(\R^2)$, as $\alpha\rightarrow0^+$.
\qed

\end{document}